\documentclass[reqno,11pt]{amsart}

\usepackage{amsmath,amsfonts,amssymb,amsthm,graphicx}
\usepackage{enumerate}

\voffset=-1.5cm \textheight=23cm \hoffset=-.5cm \textwidth=16cm
\oddsidemargin=1cm \evensidemargin=-.1cm
\footskip=35pt \linespread{1.25}
\parindent=20pt

\setcounter{tocdepth}{2}

\sloppy \allowdisplaybreaks
\numberwithin{equation}{section}

\font\script=rsfs10 at 11pt
\font\smallscript=rsfs10 at 8pt
\def\eps{\varepsilon}
\def\Q{\mathcal Q}
\def\H{{\mbox{\script H}\,\,}}

\def\comp{\subset\subset}
\def\C{{\mathcal C}}
\def\M{{\mathcal M}}
\def\R{{\mathbb R}}
\def\D{{\mathcal D}}
\def\G{{\mbox{\script G}\,}}
\def\smG{{\mbox{\smallscript G}\,}}
\def\QQ{{\mathcal Q}}
\def\N{\mathbb N}
\def\NN{\mathcal N}
\def\bal{\begin{aligned}}
\def\eal{\end{aligned}}
\def\res{\mathop{\hbox{\vrule height 7pt width .5pt depth 0pt \vrule height .5pt width 6pt depth 0pt}}\nolimits}
\def\proofof#1{\begin{proof}[Proof of #1]}
\def\step#1#2{\par\noindent{\underline{\it Step~#1.}}\emph{ #2}\\}
\def\claim#1#2{\par\noindent{\underline{\bf Claim~#1.}}\emph{ #2}\\}

\def\S{\mathbb S}
\def\mathS{\mathcal S}

\def\u#1{\hbox{\boldmath $#1$}} 

\def\XXint#1#2#3{{\setbox0=\hbox{$#1{#2#3}{\int}$} \vcenter{\vspace{-1pt}\hbox{$#2#3$}}\kern-.5\wd0}}

\theoremstyle{plain}
\newtheorem{lemma}{Lemma}[section]

\newtheorem{theorem}[lemma]{Theorem}

\newtheorem{defin}[lemma]{Definition}
\newtheorem{remark}[lemma]{Remark}

\def\de0#1{\rule[3pt]{#1}{0.4pt} \hspace{-0.1pt} \rule[3.05pt]{0.05pt}{0.4pt} \hspace{-0.1pt} \rule[3.1pt]{0.05pt}{0.4pt} \hspace{-0.1pt} \rule[3.15pt]{0.05pt}{0.4pt} \hspace{-0.1pt} \rule[3.2pt]{0.05pt}{0.4pt} \hspace{-0.1pt} \rule[3.25pt]{0.05pt}{0.4pt} \hspace{-0.1pt} \rule[3.3pt]{0.05pt}{0.4pt} \hspace{-0.1pt} \rule[3.35pt]{0.05pt}{0.4pt} \hspace{-0.1pt} \rule[3.4pt]{0.05pt}{0.4pt} \hspace{-0.1pt} \rule[3.45pt]{0.05pt}{0.4pt} \hspace{-0.1pt} \rule[3.5pt]{0.05pt}{0.4pt} \hspace{-0.1pt} \rule[3.55pt]{0.05pt}{0.4pt} \hspace{-0.1pt} \rule[3.6pt]{0.05pt}{0.4pt} \hspace{-0.1pt} \rule[3.65pt]{0.05pt}{0.4pt} \hspace{-0.1pt} \rule[3.7pt]{0.05pt}{0.4pt} \hspace{-0.1pt} \rule[3.75pt]{0.05pt}{0.4pt} \hspace{-0.1pt} \rule[3.8pt]{0.05pt}{0.4pt} \hspace{-0.1pt} \rule[3.85pt]{0.05pt}{0.4pt} \hspace{-0.1pt} \rule[3.9pt]{0.05pt}{0.4pt} \hspace{-0.1pt} \rule[3.95pt]{0.05pt}{0.4pt} \hspace{-0.1pt} \rule[4.0pt]{0.05pt}{0.4pt} \hspace{-0.1pt} \rule[4.05pt]{0.05pt}{0.4pt} \hspace{-0.1pt} \rule[4.1pt]{0.05pt}{0.4pt} \hspace{-0.1pt} \rule[4.15pt]{0.05pt}{0.4pt} \hspace{-0.1pt} \rule[4.2pt]{0.05pt}{0.4pt} \hspace{-0.1pt} \rule[4.25pt]{0.05pt}{0.4pt} \hspace{-0.1pt} \rule[4.3pt]{0.05pt}{0.4pt} \hspace{-0.1pt} \rule[4.35pt]{0.05pt}{0.4pt} \hspace{-0.1pt} \rule[4.4pt]{0.05pt}{0.4pt} \hspace{-0.1pt} \rule[4.45pt]{0.05pt}{0.4pt} \hspace{-0.1pt} \rule[4.5pt]{0.05pt}{0.4pt} \hspace{-0.1pt} \rule[4.55pt]{0.05pt}{0.4pt} \hspace{-0.1pt} \rule[4.6pt]{0.05pt}{0.4pt} \hspace{-0.1pt} \rule[4.65pt]{0.05pt}{0.4pt} \hspace{-0.1pt} \rule[4.7pt]{0.05pt}{0.4pt} \hspace{-0.1pt} \rule[4.75pt]{0.05pt}{0.4pt} \hspace{-0.1pt} \rule[4.8pt]{0.05pt}{0.4pt} \hspace{-0.1pt} \rule[4.85pt]{0.05pt}{0.4pt} \hspace{-0.1pt} \rule[4.9pt]{0.05pt}{0.4pt} \hspace{-0.1pt} \rule[4.95pt]{0.05pt}{0.4pt} \hspace{-0.1pt} \rule[5.0pt]{0.05pt}{0.4pt} \hspace{-0.1pt} \rule[5.05pt]{0.05pt}{0.4pt} \hspace{-0.1pt} \rule[5.1pt]{0.05pt}{0.4pt} \hspace{-0.1pt} \rule[5.15pt]{0.05pt}{0.4pt} \hspace{-0.1pt} \rule[5.2pt]{0.05pt}{0.4pt} \hspace{-0.1pt} \rule[5.25pt]{0.05pt}{0.4pt} \hspace{-0.1pt} \rule[5.3pt]{0.05pt}{0.4pt} \hspace{-0.1pt} \rule[5.35pt]{0.05pt}{0.4pt} \hspace{-0.1pt} \rule[5.4pt]{0.05pt}{0.4pt} \hspace{-0.1pt} \rule[5.45pt]{0.05pt}{0.4pt} \hspace{-0.1pt} \rule[5.5pt]{0.05pt}{0.4pt} \hspace{-0.1pt} \rule[5.55pt]{0.05pt}{0.4pt} \hspace{-0.1pt} \rule[5.6pt]{0.05pt}{0.4pt} \hspace{-0.1pt} \rule[5.65pt]{0.05pt}{0.4pt} \hspace{-0.1pt} \rule[5.7pt]{0.05pt}{0.4pt} \hspace{-0.1pt} \rule[5.75pt]{0.05pt}{0.4pt} \hspace{-0.1pt} \rule[5.8pt]{0.05pt}{0.4pt} \hspace{-0.1pt} \rule[5.85pt]{0.05pt}{0.4pt} \hspace{-0.1pt} \rule[5.9pt]{0.05pt}{0.4pt} \hspace{-0.1pt} \rule[5.95pt]{0.05pt}{0.4pt} \hspace{-0.1pt} \rule[6.0pt]{0.05pt}{0.4pt}}	

\begin{document}

\title{Comparison between the non-crossing and the non-crossing on lines properties}

\author{D. Campbell}
\author{A. Pratelli}
\author{E. Radici}

\begin{abstract}
In the recent paper~\cite{DPP}, it was proved that the closure of the planar diffeomorphisms in the Sobolev norm consists of the functions which are non-crossing (NC), i.e., the functions which can be uniformly approximated by continuous one-to-one functions on the grids. A deep simplification of this property is to consider curves instead of grids, so considering functions which are non-crossing on lines (NCL). Since the NCL property is way easier to check, it would be extremely positive if they actually coincide, while it is only obvious that NC implies NCL. We show that in general NCL does not imply NC, but the implication becomes true with the additional assumption that $\det(Du)>0$ a.e.\,, which is a very common assumption in nonlinear elasticity.
\end{abstract}

\maketitle

\section{Introduction}

In the framework of nonlinear elasticity, it is important to consider functions which are Sobolev limits of diffeomorphisms, since all these functions are meaningful deformations. Notice that such planar limits, at least for $p<2$, are not necessarily continuous, and they can fail both to be injective and to be surjective. In fact, some area can be shrunken to a point, or vice versa a point can be stretched to a positive area (these are the so-called \emph{cavitations}, around which the function behaves like $x\mapsto x/|x|$). On the other hand, it is clear that not every Sobolev function can be the limit of diffeomorphisms. In particular, this is not possible for a function which is ``very far'' from being invertible. In order to give a precise description of this kind of functions, M\"uller and Spector (\cite{MS}, see also the earlier work of \v Sver\'ak~\cite{Sve}) in the 1990's introduced the notion of \emph{INV functions}, see Definition~\ref{defINV}. Roughly speaking, a function satisfies the INV property if, for any ball $B$ in the domain, points which are inside (resp., outside) $B$ are sent inside (resp., outside) the image of $\partial B$. Since of course any diffeomorphism enjoys the INV property, and this property is preserved under Sobolev limits, then every limit of diffeomorphisms is an INV function. A natural conjecture would have been that INV functions in fact coincide with the closure of diffeomorphisms, but it has recently been shown that it is not so, and this closure is made by the \emph{non-crossing (NC) functions}, see Definition~\ref{defNC}. Roughly speaking, a function $u$ is said to be non-crossing if for any one-dimensional ``grid'' $\G$ inside the domain it is possible to find a continuous, \emph{injective} function on $\G$ which is arbitrarily close in the uniform sense to $u$.

Only for simplicity of notations, we restrict our attention to functions which map the unit square $\Q=[-1,1]^2$ on the unit square $\u\Q=[-1,1]^2$, and which coincide with the identity on the boundary. The characterisation of the Sobolev closure of diffeomorphims is then the following (see~\cite[Theorem~A and~B]{DPP}).

\begin{theorem}\label{thDPP} Let $\D$ be the family of the diffeomorphisms of $\Q$ onto $\u\Q$ which coincide with the identity on the boundary. For any $1\leq p<+\infty$, a $W^{1,p}$ function $u:\Q\to\u\Q$ is a weak $W^{1,p}$ limit of a sequence in $\D$ if and only if it is a strong $W^{1,p}$ limit of a sequence in $\D$, and if and only if it is a non-crossing function.
\end{theorem}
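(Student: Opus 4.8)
The plan is to prove the two ``if and only if''s by a cycle of implications: (i) every diffeomorphism in $\D$ is non-crossing; (ii) the NC property is stable under weak $W^{1,p}$ limits; (iii) every NC map is a \emph{strong} $W^{1,p}$ limit of maps in $\D$. Since a strong limit is a fortiori a weak limit, (i)--(iii) force the three conditions to coincide (in particular, weak limits are automatically strong). Item (i) is immediate: a diffeomorphism $v\in\D$ is a homeomorphism of $\Q$ onto $\u\Q$, so its restriction to any grid $\G$ is continuous and injective, and thus $v$ certifies, with zero error, that $v$ is non-crossing. For (ii), let $u_k\rightharpoonup u$ in $W^{1,p}(\Q;\R^2)$ with $u_k\in\D$: by Rellich compactness $u_k\to u$ in $L^p(\Q)$, and by Fubini together with the uniform bound on $\int_\Q|Du_k|^p$ and a diagonal extraction one gets, up to a subsequence, that for a.e.\ grid $\G$ the restrictions $u_k|_\G$ are bounded in $W^{1,p}$ on each line of $\G$ and converge to $u|_\G$ in the sense entering the definition of NC (uniformly if $p>1$, and for a suitable precise representative in general). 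As each $u_k|_\G$ is continuous and injective by (i), the limit $u|_\G$ is a uniform limit of continuous injective maps, i.e.\ $u$ is non-crossing.

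The substance is implication (iii). Fix $\eps>0$. One first passes to a ``finitely determined'' competitor: choose a grid $\G$ of small mesh $\delta$ (including $\partial\Q$ among its lines) so that $u$ has an absolutely continuous representative on every line of $\G$ and, by an averaging argument, $\int_\G|Du|^p\lesssim\delta^{-1}\int_\Q|Du|^p$, and mollify $u$ slightly inside the squares. By the NC property choose a continuous injective $\varphi\colon\G\to\u\Q$ with $\|\varphi-u\|_{L^\infty(\G)}<\eps$, forced to equal the identity near $\partial\Q$ (using $u=\mathrm{id}$ there); we may take $\varphi$ piecewise affine with derivative along $\G$ not much larger than that of $u$. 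The image $\varphi(\G)$ is a topological grid cutting $\u\Q$ into finitely many Jordan domains, one per square of $\G$, so by a piecewise-affine Schoenflies extension we obtain a \emph{homeomorphism} $v$ of $\Q$ onto $\u\Q$ with $v=\varphi$ on $\G$ and $v=\mathrm{id}$ on $\partial\Q$; a classical mollification of planar homeomorphisms then yields a diffeomorphism $w\in\D$ with $\|w-v\|_{L^\infty}<\eps$. Uniform closeness gives $w\to u$ in $L^p$; the real point is $\|Dw-Du\|_{L^p(\Q)}\to0$, obtained by a multiscale gluing in which $w$ is essentially the mollification of $u$ on the bulk of each square (hence $W^{1,p}$-close to $u$) and transitions to the boundary values $\varphi$ only inside a thin collar of $\G$, the energy of that transition being controlled by $\int_\G|Du|^p$, $\eps$ and the collar width, and made negligible by tuning these scales before letting $\delta\to0$.

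I expect the main obstacle to be precisely that last quantitative step: the NC property and the Schoenflies extension are purely topological and carry no energy information, so one must convert a \emph{merely uniform} approximation on a one-dimensional skeleton into a \emph{Sobolev} approximation on the whole square, which requires an explicit extension with energy governed by its boundary data, a choice of the uncrossed curve $\varphi$ not gaining derivative relative to $u|_\G$, and a careful calibration of mesh, uniform error and collar width so that the repair near the grid is $W^{1,p}$-negligible. A further delicacy --- and the reason the correct objects here are diffeomorphisms together with the $L^\infty$-on-grids formulation, rather than INV maps or homeomorphic approximation --- is that $u$ may cavitate, collapsing positive-area sets to points or opening points to positive area; the construction must allow $w$ to almost-collapse or almost-open thin necks while remaining a genuine diffeomorphism, which is consistent with uniform closeness on grids but not with a stronger topological matching.
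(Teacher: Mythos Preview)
This theorem is not proved in the present paper; it is quoted from~\cite{DPP} (Theorems~A and~B there) as background for the NC/NCL comparison that is the paper's actual subject. So there is no in-paper proof to compare your attempt against, and the full argument in~\cite{DPP} occupies a substantial part of that article.

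That said, your cycle (i)$\Rightarrow$(ii)$\Rightarrow$(iii) is the correct architecture, and (i) is indeed immediate. Two remarks on the remaining implications.

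For (ii), your Fubini/diagonal argument only delivers the approximation property for \emph{almost every} grid, whereas Definition~\ref{defNC} asks for it on \emph{every} admissible $\Gamma\subseteq\Q\setminus\NN_u$. Passing from ``a.e.\ grid'' to ``every grid'' requires an additional step (either showing that failure on one $\Gamma$ propagates to a positive-measure family of nearby grids, or using continuity of $u$ on $\Gamma$ to transfer the property from a nearby good grid), which you have not supplied. This is not deep, but it is a genuine missing link.

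For (iii), you have correctly located where the work is, but you have not done it. The sentence ``made negligible by tuning these scales before letting $\delta\to0$'' is precisely the content of the long construction in~\cite{DPP}: one needs an explicit extension from the grid into each square whose $W^{1,p}$ energy is \emph{quantitatively} controlled by the tangential derivative of the boundary datum, a version of the NC-approximant $\varphi$ that does not inflate $|D_\tau u|$ (mere $L^\infty$-closeness gives no such control), and a careful hierarchy of scales so that the collar repair is energetically negligible. Your paragraph describes the \emph{shape} of that argument but provides none of the estimates; as written it is a plausible program, not a proof. In particular, the phrase ``we may take $\varphi$ piecewise affine with derivative along $\G$ not much larger than that of $u$'' hides a nontrivial claim: the NC definition hands you only a continuous injective $\varphi$ uniformly close to $u$, with no derivative bound, and upgrading this is one of the technical points of~\cite{DPP}.
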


A drawback of this characterisation is that the non-crossing property is not simple to check in particular around the vertices of a grid. The property becomes extremely simpler to check if one only considers non self-intersecting curves, so completely avoiding the troubles that may occur around vertices. In doing so, one finds a weaker property, that we call \emph{non-crossing on lines (NCL)}, see Definition~\ref{defNCL}.

Since this property is much simpler, it would be very nice if it actually coincides with the NC property. The goal of this paper is precisely to study whether or not this is true, and we are able to give a precise answer. In Section~\ref{sec:ctx} we will show that this is not true in general, in fact the NCL property does not even imply the weaker INV property. On the bright side, in Section~\ref{sec:bright} we show that this becomes true for functions $u$ such that $\det(Du)>0$ a.e.\,. This is a very common property in the non-linear elasticity, which is often taken for granted. Indeed, a function $u$ for which this property is false maps a set of positive measure onto a null set, thus some mass ``disappears'', and this is often not admitted. In addition, such a function has an infinite elastic energy under most models.

\section{Definition of NC, NCL and INV maps}

This section is devoted to present the definition of the NC, the NCL, and the INV maps, and to set some notation.\par

As usual, for every $x\in\R^2$ and $r>0$ we denote by $B(x,r)$ the disk centered at $x$ and with radius $r$, and for brevity we write $B_r$ in place of $B(0,r)$. For every function $u\in W^{1,1}(\QQ;\u\QQ)$, we call $\NN_u$ the set of the discontinuity points of $u$. For a generic Sobolev function $u$, $\NN_u$ is a Borel, negligible set. If $u$ is an INV function, the set $\NN_u$ is actually $\H^1$-negligible (see~\cite[Lemma~2.5]{DPP}).

\begin{defin}[INV functions]\label{defINV}
Let $u:\Q\to\u\Q$ be a $W^{1,1}$ function. We say that \emph{$u$ is an INV function} if the following holds. Let $B(x,r)\comp \Q$ be any ball such that the restriction of $u$ to the circle $\mathS=\partial B(x,r)$ is $W^{1,1}$, so in particular $u(\mathS)$ is a continuous, closed curve, possibly self-crossing. Let moreover $z\in\Q\setminus \NN_u$ be a point such that $u(z)\notin u(\mathS)$, so in particular $z\notin\mathS$. Then, if $z\notin B(x,r)$ the point $u(z)$ has degree $0$ with respect to the curve $u(\mathS)$, while if $z\in B(x,r)$ the point $u(z)$ has non-zero degree with respect to the same curve.
\end{defin}

The above definition may seem quite technical at first glance, but in fact it is very reasonable. It is basically saying that every point which is contained ``inside'' a curve has an image which is ``inside'' the image of the curve, and every point which is outside has image outside the image of the curve, in the sense of degree. In other words, ``what is inside remains inside, and what is outside remains outside''.

\begin{defin}[NC functions]\label{defNC}
Let $u:\QQ\to\u\QQ$ be a Sobolev map for which the set $\NN_u$ of the discontinuity points of $u$ is $\H^1$-negligible and which equals the identity on $\partial\Q$. We say that \emph{$u$ is a non-crossing (NC) function} if the following holds. Let $\Gamma\subseteq \Q\setminus\NN_u$ be a finite union of injective, Lipschitz curves such that any two curves have either empty intersection or a single intersection point, and in this case both curves have a tangent vector at the intersection point, and the two vectors are not parallel, and moreover all these intersection points between the curves are different. Then, for every $\eps>0$ there exists a continuous and injective function $v:\Gamma\to\u\Q$, coinciding with the identity on $\partial\Q\cap\Gamma$, such that $\|v-u\|_{L^\infty(\Gamma)}<\eps$.
\end{defin}

Also the definition of the NC functions is rather clear. One does not ask the function to be injective, but to be uniformly close to an injective function on every ``grid'' $\Gamma$. It is also simple to realize the reason for the name. Namely, the simplest possibility for a function not to fulfill the definition, is that two disjoint curves on $\Q$ have images which are two crossing curves on $\u\Q$. Notice that, if this happens, then the property of Definition~\ref{defNC} already fails for a set $\Gamma$ made by a single curve. In turn, of course the above property is radically simpler to state and to check if one restricts himself to the case of a single, Lipschitz, injective curve, instead of finitely many ones which might intersect each other. Also because, clearly, the vertices of the grid (i.e., the intersection points between the different curves) are the most delicate points to treat when checking the validity of the NC property. As a consequence, the following definition is quite natural.

\begin{defin}[NCL functions]\label{defNCL}
Let $u:\QQ\to\u\QQ$ be a Sobolev map for which the set $\NN_u$ of the discontinuity points of $u$ is $\H^1$-negligible and which equals the identity on $\partial\Q$. We say that the function $u$ is \emph{non-crossing on lines (NCL)} if for every injective, Lipschitz curve $\gamma\subseteq\Q\setminus\NN_u$ and for every $\eps>0$ there exists a continuous and injective function $v:\gamma\to\u\Q$, coinciding with the identity on $\partial\Q\cap\gamma$, such that $\|u-v\|_{L^\infty(\gamma)}<\eps$.
\end{defin}

While the NCL property is much simpler and more natural than the NC one, it is the latter which characterises the closure of the diffeomorphisms in the Sobolev norm, thanks to Theorem~\ref{thDPP}. As a consequence, property NCL is in fact useless, unless it coincides with NC. In other words, if NCL implies NC (the other implication is obvious), then it is a very good simplification of the characterisation given by Theorem~\ref{thDPP}. On the contrary, if NCL is strictly weaker than NC, then it is a property which is easy to check but of no use at all. In the next two sections we will notice that in general NCL does not imply NC, and not even INV; but NCL implies (so, it is equivalent to) NC in the physically relevant case of functions $u\in W^{1,p}(\Q;\u\Q)$ for which $\det Du>0$ almost everywhere.

We conclude this section by recalling the pointwise notion of Sobolev functions, in the sense of multifunctions. This is a useful tool, which allows to define in a precise way the ``generalised image'' of every (not almost every!) point.
\begin{defin}
Let $u\in W^{1,1}(\Q;\u\Q)$. For every $x\in\Q$ and every $\eps>0$, we denote by $\D(x,\eps)$ the collection of the open, Lipschitz sets $D\comp B(x,\eps)$ such that the restriction of $u$ to $\partial D$ is continuous. For every such set $D\in\D(x,\eps)$, we also set
\[
U(D) := \Big\{ \u P\in \u\Q:\, {\rm deg}(\u P,u\res\partial D)\neq 0 \Big\} \cup u(\partial D)\,.
\]
Finally, we call \emph{generalised image of $x$ through $u$} the set
\[
u_{\rm multi}(x) := \Big\{ \u P \in \u\Q:\, \exists \,\eps_n\searrow 0,\, D_n\in \D(x,\eps_n),\, \u P_n\in U(D_n):\, \u P=\lim_{n\to\infty} \u P_n\Big\}\,.
\]
\end{defin}
Notice that $u_{\rm multi}(x)$ is a non-empty closed set, which is defined for every $x\in\Q$. The resulting $u_{\rm multi}:\Q \to 2^{\u\Q}$ is then a multi-valued function. Notice that for every continuity point $x\in \Q$ the set $u_{\rm multi}(x)$ reduces to the sole $u(x)$.

\begin{remark}\label{remidb}
Observe that we can always assume without loss of generality that the function $u$ equals the identity not only on the boundary $\QQ$, but also on a small neighborhood of it. Indeed, we can extend $u$ as the identity on a slightly bigger square and then make a homothety.
\end{remark}

\section{The NCL property does not imply INV nor NC\label{sec:ctx}}

This section is devoted to show that the NCL property is not equivalent to the NC one, and in fact it does not even imply INV. In the next section, we will see that NCL becomes equivalent to NC under the additional assumption that $\det Du>0$ a.e.\,.

The whole section consists in presenting and discussing the counterexample, which is an NCL and not {an INV function $u:B_2\to B_2$. We start describing $u$ on $B_2\setminus B_1$. Calling $P=(-1,0)$ and $\mathS=[0,1]\times\{0\}$, the properties which are important are that the restriction of $u$ on $B_2\setminus \overline{B_1}$ is a diffeomorphism onto $B_2\setminus\mathS$, and that
\begin{align*}
u(x)=x\quad \forall\, x\in\partial B_2\,, && u(x)=(1,0)\quad\forall\, x\in\partial B_1\setminus P\,.
\end{align*}
Notice that such a function exists, and it can be constructed in $W^{1,p}(B_2\setminus \overline{B_1})$ for every $1\leq p<2$. One can easily imagine this function as a diffeomorphism which is the identity on $\partial B_2$, and which shrinks the circle $\partial B_1$ onto the segment $\mathS$, compressing the whole $\partial B_1\setminus P$ to the point $(1,0)$. As a consequence, the generalised image of the point $P$ is the whole segment $\mathS$.\par

Let us now pass to describe the function $u$ on $B_1\setminus B_{1/2}$. For every $0\leq \eps\leq 1/2$, the image under $u$ of the circle $\partial B_{1-\eps}$ is the segment $[0,1-2\eps]\times\{0\}$. More precisely, for every $0\leq \theta\leq 2\pi$ we set
\begin{equation}\label{def121}
u\Big((1-\eps)\cos\theta,(1-\eps)\sin\theta)\Big) = \bigg( \frac{1-2\eps}{2\pi\eps}\, |\pi-\theta|\wedge 1,0\bigg)\,.
\end{equation}
Notice that the function $u$ is still continuous outside the point $P$ (actually, an obvious modification would allow to take $u$ smooth outside $P$). In particular, $|Du(x)| \approx \tfrac{1}{|x-P|}$ for $x$ near $P$, and by a simple use of polar coordinates centred at $P$ we have
\[
\int_{B(P,\frac{1}{4})} |Du(x)|^p \approx \int_0^{\frac{1}{4}}r^{1-p} < \infty
\]
 for all $p<2$. Further, since far away from $P$ the derivative $|Du|$ is uniformly bounded, we have $u\in W^{1,p}(B_2\setminus \overline{B_{1/2}})$ for any $1\leq p<2$. Roughly speaking, the images of the circles $\partial B_r$ with radii $r$ between $1$ and $1/2$ are shorter and shorter segments, passing from the whole $\mathS$, corresponding to the radius $r=1$, to the single point $(0,0)$, corresponding to the radius $r=1/2$.\par

Finally, the function $u$ on $B_{1/2}$ is simply defined as
\[
u(x)=\big(1-2|x|,0)\,,
\]
that is, every circle $B_r$ with $0\leq r\leq 1/2$ is sent on a single point, which moves from $(0,0)$ to $(1,0)$ while $r$ decreases from $1/2$ to $0$.

By construction, for any $1\leq p<2$ we have that
\begin{align*}
u\in W^{1,p}(B_2)\,, &&
u(x)=x\quad\forall\, x\in\partial B_2\,, &&
u\in {\rm C}(B_2\setminus \{P\})\,.
\end{align*}
Moreover, we have that
\begin{equation}\label{unotINV}
\hbox{$u$ is not an INV function}\,.
\end{equation}
Indeed, for every $1/2<r<1$ the function $u$ is continuous on $\partial B_r$. Nevertheless, the origin $(0,0)$ is contained inside each of the disks $B_r$, and $u(0,0)=1$ has zero degree with respect to the curve $u\res \partial B_r$.\par

To conclude the example, we have to show that $u$ satisfies the NCL property, and this will take the rest of the section. Since $u$ is continuous on $B_2\setminus \{P\}$, we fix an injective, Lipschitz curve $\gamma:[0,1]\to B_2\setminus\{P\}$ and some $\eps>0$. Our goal is to define a function $\varphi:[0,1] \to B_2$ such that
\begin{align}\label{Linfty01}
\hbox{$\varphi$ is continuous and injective}\,, && \|\varphi- u\circ \gamma\|_{L^\infty([0,1])}<\eps\,.
\end{align}
By the continuity of $u$ on $B_2\setminus \{P\}$ (and by $v \cdot w$ denoting the usual scalar product), without loss of generality we can assume that
\begin{align}\label{ass2}
\gamma\in {\rm C}^\infty([0,1])\,, &&
\gamma(0)\cup \gamma(1) \in \partial B_2\,, &&
\gamma'(t)\cdot \gamma(t)\neq 0\quad \forall \, t\in \gamma^{-1}(\partial B_1)\,.
\end{align}
The last assumption implies that, whenever it meets the circle $\partial B_1$, the curve $\gamma$ is actually entering in the disk $B_1$, or exiting from it. As a consequence, there exists some $N\in\N$ and points $0< a_1<b_1<a_2<b_2<\, \cdots \, < a_N<b_N<1$ such that
\[
\gamma^{-1}(B_1)= \cup_{i=1}^N (a_i,b_i)\,,
\]
while $u^{-1}(\partial B_1)$ only consists precisely of the union of the points $a_i$ and $b_i$. By compactness and~(\ref{ass2}), it is possible to select $\delta\ll \eps$ so that $\gamma([0,1])\cap \partial B_{1+\delta}$ consists of $2N$ points $0< a_1^-<b_1^+<a_2^-<b_2^+<\, \cdots \, < a_N^-<b_N^+<1$, with $(a_i,b_i)\comp (a_i^-,b_i^+)$ for every $1\leq i\leq N$, see a possible example in Figure~\ref{Fig:AiBi}, left.
\begin{figure}[thbp]
\begin{align*}
\scalebox{0.7}{
\input{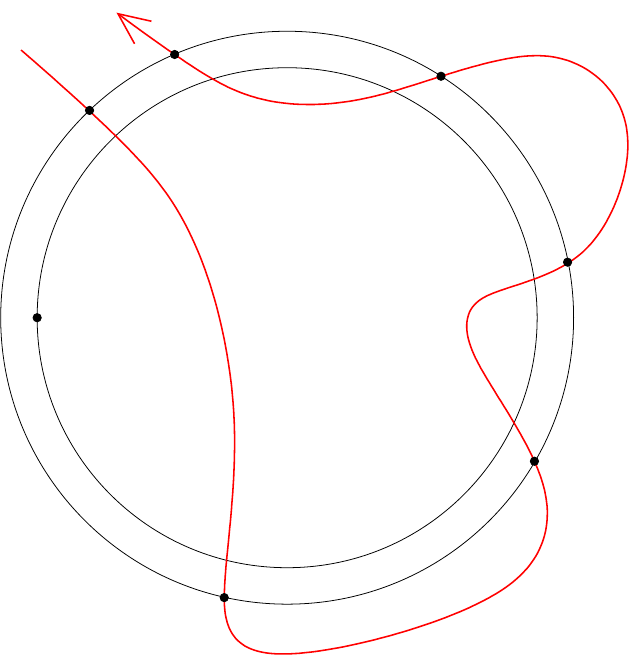_t}
}
\quad&&\quad
\scalebox{0.7}{
\input{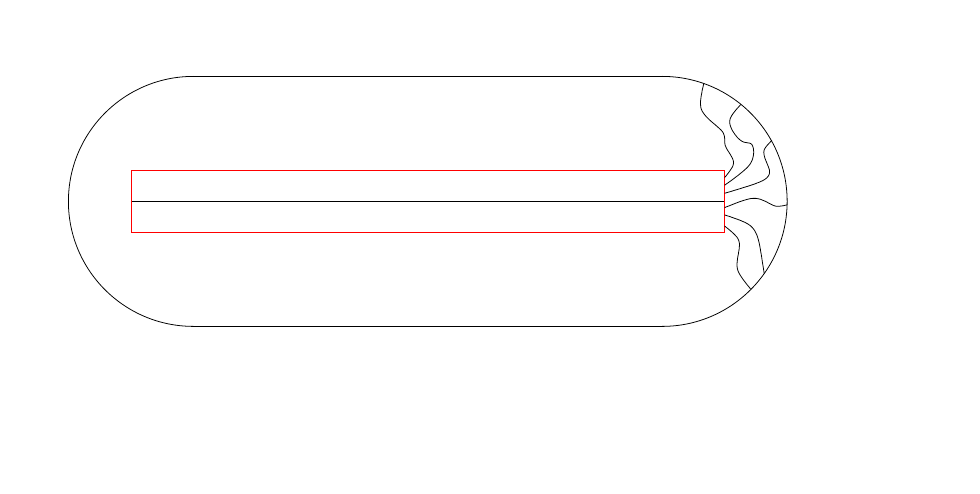_t}
}
\end{align*}
\caption{Example of a possible curve $\gamma$ near $B_1$ and image under $\varphi$ of the intervals $(a_i^-,a_i)$ and $(b_i,b_i^+)$.}\label{Fig:AiBi}
\end{figure}
For any such $1\leq i\leq N$, we define
\begin{align*}
P_i = \gamma(a_i^-)\,, && Q_i=\gamma(b_i^+)\,, &&
\u P_i=u(P_i)\,, && \u Q_i=u(Q_i)\,.
\end{align*}
Since $u$ is injective outside of $\overline{B_1}$, the $P_i$'s and the $Q_i$'s are $2N$ distinct points in $\partial B_i\setminus \{P\}$, and the chords $P_iQ_i$ are all disjoint. Notice that by construction, up to take $\delta$ small enough, the Hausdorff distance between $\C_\delta:=u(\overline{B_{1+\delta}})$ and the segment $\mathS$ is $d_H(\C_\delta,\mathS)\ll\eps$, as well as the distance between each of the points $\u P_i,\, \u Q_i$ and the point $(1,0)$ --keep in mind that $u$ shrinks the whole $\partial B_1\setminus \{P\}$ on the point $(1,0)$.\par

Let us now start constructing the function $\varphi$. First of all, we let $\varphi=u\circ \gamma$ on the set $[0,1]\setminus \cup_{i=1}^N (a_i^-,b_i^+)$. Of course, where it has been already defined, $\varphi$ satisfies~(\ref{Linfty01}).\par

We now pass to define $\varphi$ on the union $U$ of the intervals $[a_i^-,a_i]$ and $[b_i,b_i^+]$. To do so, we let $\delta'\ll \delta$ be a constant such that the open rectangle $\mathS^+=(0,1)\times (-\delta',\delta')$ is compactly contained in $\C_\delta$. Moreover, we select $2N$ distinct points $\widetilde{\u P}_i,\, \widetilde{\u Q}_i$ on the segment $\{1\} \times (-\delta',\delta')$, which is the right side of $\mathS^+$, in such a way that the order of the points $\gamma(a_i)$ and $\gamma(b_i)$ on $\partial B_1\setminus \{P\}$) is the same as the order of the points $\widetilde{\u P}_i,\, \widetilde{\u Q}_i$ on the segment $\{1\} \times (-\delta',\delta')$. It is then possible to define $\varphi$ as a continuous, injective function on $U$ in such a way that for every $1\leq i\leq N$
\begin{align*}
\varphi(a_i^-)=\u P_i\,, &&
\varphi(a_i)=\widetilde{\u P}_i\,, &&
\varphi(b_i)=\widetilde{\u Q}_i\,, &&
\varphi(b_i^+)=\u Q_i\,,
\end{align*}
and that
\begin{align*}
\varphi(U)\subseteq \C_\delta\setminus \mathS^+\,,&&
\varphi(U\setminus \cup_{i=1}^N \{a_i,\,b_i\})\subseteq (\C_\delta)^\circ\setminus \overline{\mathS^+}\,,&&
d_H\big(\varphi(U),(1,0)\big)\ll \eps\,.
\end{align*}
It is still true that, where it has been already defined, $\varphi$ satisfies~(\ref{Linfty01}). The situation is depicted in Figure~\ref{Fig:AiBi}, right.\par

To conclude, we have to define $\varphi$ in each interval $[a_i,b_i]$. To do so, we recall that $u$ is continuous on $B_2\setminus \{P\}$ and that $u(B_1)=\mathS$. As a consequence, for every $1\leq i \leq N$ we have
\[
u\big(\gamma([a_i,b_i])\big) = [\ell_i, 1]\times \{0\}
\]
for some $0\leq \ell_i<1$. Since, as noticed before, all the chords $P_iQ_i$ are disjoint, then for every $1\leq i,\, j\leq N$ with $i\neq j$ the two segments $\widetilde{\u P}_i\widetilde{\u Q}_i$ and $\widetilde{\u P}_j\widetilde{\u Q}_j$ are either disjoint or contained into one other. We have the following property.
\begin{lemma}\label{leleft}
Let $1\leq i,\,j\leq N$ be two distinct indices such that $\widetilde{\u P}_i\widetilde{\u Q}_i\subseteq \widetilde{\u P}_j\widetilde{\u Q}_j$. Then, $\ell_j\leq \ell_i$.
\end{lemma}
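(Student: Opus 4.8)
The plan is to use the INV-type structure that $u$ inherits from being a diffeomorphism on $B_2\setminus\overline{B_1}$, reading off everything from the nesting of the chords $P_iQ_i$. The key geometric fact is that for $1/2<r<1$ the image $u(\partial B_r)=[0,1-2(1-r)]\times\{0\}$ is a segment of length decreasing in $1-r$; equivalently, the value $\ell_i=\min\{x_1:\ (x_1,0)\in u(\gamma([a_i,b_i]))\}$ is governed by how ``deep'' into the stack of disks $B_r$ the arc $\gamma([a_i,b_i])$ penetrates, and crucially $\ell_i$ depends only on $\min_{t\in[a_i,b_i]}|\gamma(t)|$, because of the explicit radial form~\eqref{def121} together with $u(B_{1/2})=[0,1]\times\{0\}$ coming from $u(x)=(1-2|x|,0)$. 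So the whole statement reduces to: if $\widetilde{\u P}_i\widetilde{\u Q}_i\subseteq\widetilde{\u P}_j\widetilde{\u Q}_j$ then $\min_{[a_i,b_i]}|\gamma|\geq\min_{[a_j,b_j]}|\gamma|$, i.e.\ the arc $\gamma|_{[a_i,b_i]}$ does not go deeper than $\gamma|_{[a_j,b_j]}$.

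First I would translate the hypothesis into the domain. The points $\widetilde{\u P}_i,\widetilde{\u Q}_i$ were placed on $\{1\}\times(-\delta',\delta')$ in the same cyclic order as the points $\gamma(a_i),\gamma(b_i)$ on $\partial B_1\setminus\{P\}$, which in turn (by injectivity of $u$ off $\overline{B_1}$ and continuity) inherit the order of $P_i=\gamma(a_i^-)$, $Q_i=\gamma(b_i^+)$ on $\partial B_{1+\delta}\setminus\{P\}$, and the chords $P_iQ_i$ are pairwise disjoint. Hence $\widetilde{\u P}_i\widetilde{\u Q}_i\subseteq\widetilde{\u P}_j\widetilde{\u Q}_j$ is equivalent to the chord $P_iQ_i$ lying ``inside'' the chord $P_jQ_j$, i.e.\ the two boundary points $a_i^-,b_i^+$ of the $i$-th excursion of $\gamma$ across $\partial B_{1+\delta}$ both lie on the arc of $\partial B_{1+\delta}$ cut off by $a_j^-,b_j^+$ that does \emph{not} contain $P$. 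Equivalently: the connected arc $\gamma([a_i^-,b_i^+])$ and the connected arc $\gamma([a_j^-,b_j^+])$ are nested in the sense that the interval $[a_i^-,b_i^+]$ is, up to reversing orientation along $\gamma$, ``enclosed'' by $[a_j^-,b_j^+]$ — here one must be a little careful, since $i<j$ or $i>j$ and $\gamma$ may traverse the regions in either sense, but disjointness of the chords plus the Jordan curve theorem pins down the combinatorics.

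Next I would run a Jordan-curve / topological argument inside $B_1$. Consider the arc $\gamma([a_j,b_j])$: together with a boundary arc of $\partial B_1$ it bounds a region $\Omega_j\subset B_1$; the hypothesis that $\widetilde{\u P}_i\widetilde{\u Q}_i$ is strictly inside $\widetilde{\u P}_j\widetilde{\u Q}_j$ forces the endpoints $\gamma(a_i),\gamma(b_i)\in\partial B_1$ to lie on the boundary arc belonging to $\Omega_j$, so the whole arc $\gamma([a_i,b_i])$, being connected and meeting $\partial B_1$ only at its endpoints, is contained in $\overline{\Omega_j}$. In particular $\gamma([a_i,b_i])$ cannot reach a circle $\partial B_\rho$ that $\gamma([a_j,b_j])$ does not reach: if it did, there would be a point of $\gamma([a_i,b_i])$ at depth $\rho<\min_{[a_j,b_j]}|\gamma|$, but every such circle $\partial B_\rho$ separates $0$ from $\partial B_1$ and is not entered by $\gamma|_{[a_j,b_j]}$, hence lies entirely outside $\overline{\Omega_j}$ — contradiction. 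This gives $\min_{[a_i,b_i]}|\gamma|\geq\min_{[a_j,b_j]}|\gamma|$, and then the monotonicity of $u$ on radii, i.e.\ that $u(\gamma([a_i,b_i]))=[\ell_i,1]\times\{0\}$ with $\ell_i$ a decreasing function of $\min_{[a_i,b_i]}|\gamma|$ via the formulas above, yields $\ell_j\leq\ell_i$.

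The main obstacle I expect is not any single estimate but getting the planar topology exactly right: carefully defining the region $\Omega_j$ bounded by the Jordan curve $\gamma([a_j,b_j])\cup(\text{arc of }\partial B_1)$, checking which complementary arc of $\partial B_1$ is ``inside'', and matching the $\{1\}\times(-\delta',\delta')$ ordering of the $\widetilde{\u P},\widetilde{\u Q}$ to the $\partial B_1$-ordering of $\gamma(a_i),\gamma(b_i)$ and thence to the $\partial B_{1+\delta}$-ordering (this last matching is exactly where the hypothesis $\widetilde{\u P}_i\widetilde{\u Q}_i\subseteq\widetilde{\u P}_j\widetilde{\u Q}_j$ is used, and where the ``either disjoint or nested'' dichotomy for the chords is consumed). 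Once the correct arc is identified, the containment $\gamma([a_i,b_i])\subseteq\overline{\Omega_j}$ and hence the depth comparison are straightforward.
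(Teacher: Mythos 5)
Your topological setup—cutting $B_1$ along $\gamma([a_j,b_j])$, identifying the component $\Omega_j$ whose boundary arc on $\partial B_1$ catches $\gamma(a_i),\gamma(b_i)$, and concluding $\gamma([a_i,b_i])\subseteq\overline{\Omega_j}$—is essentially the separation step of the paper's proof, and it is the correct way to use the nesting of the chords. The gap is in what you extract from this containment: you discard all angular information, retain only $\min_{[a_i,b_i]}|\gamma|\geq\min_{[a_j,b_j]}|\gamma|$, and then assert that $\ell_i$ is a decreasing function of $\min_{[a_i,b_i]}|\gamma|$ alone. That assertion is false, because formula~(\ref{def121}) is not radial on $B_1\setminus B_{1/2}$. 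Writing a point as $(1-\eps)(\cos\theta,\sin\theta)$ with $0<\eps<1/2$, the first component $u_1=(1-2\eps)\big(\tfrac{|\pi-\theta|}{2\pi\eps}\wedge 1\big)$ depends on both $\eps$ and $\theta$: on a single circle $\partial B_{1-\eps}$ it ranges over the whole segment $[0,1-2\eps]$, vanishing near $\theta=\pi$ and equalling $1-2\eps$ once $|\pi-\theta|\geq 2\pi\eps$. Consequently an arc that dips only slightly into $B_1$ but hugs the angle $\theta=\pi$ will have a small $\ell$, while a much deeper arc near $\theta=0$ will have a large $\ell$; so $\ell_i$ is not determined by $\min_{[a_i,b_i]}|\gamma|$, and the implication $\min_{[a_i,b_i]}|\gamma|\geq\min_{[a_j,b_j]}|\gamma|\Rightarrow\ell_j\leq\ell_i$ fails in general. (Such a pair of arcs is not nested, which is why the lemma itself survives, but your reduction does not.)

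To finish from $\gamma([a_i,b_i])\subseteq\overline{\Omega_j}$ one must compare $u_1$ angle by angle, which is exactly what the paper's proof does. First observe that $\ell_j>0$ forces $\gamma((a_j,b_j))$ to avoid $u_1^{-1}(0)\supseteq\partial B_{1/2}\cup\big([-1,-1/2]\times\{0\}\big)$, so both arcs lie in $B_1\setminus\overline{B_{1/2}}$ and $\Omega_j$ does not contain $B_{1/2}$. Then for any $R=(1-\eps)(\cos\theta,\sin\theta)\in\gamma((a_i,b_i))$ the radial segment from $R$ to the origin leaves $\overline{\Omega_j}$ and hence must cross $\gamma((a_j,b_j))$ at some $R'=(1-\eps')(\cos\theta,\sin\theta)$ with $\eps<\eps'<1/2$; at fixed $\theta$ the quantity $(1-2\eps)\big(\tfrac{|\pi-\theta|}{2\pi\eps}\wedge 1\big)$ is decreasing in $\eps$, so $u_1(R')\leq u_1(R)$, giving $\ell_j\leq u_1(R)$ for every such $R$ and hence $\ell_j\leq\ell_i$. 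Your step proving the containment is sound; the monotonicity step is where the sketch breaks down.
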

\begin{proof}
If $\ell_j=0$ there is nothing to prove, so we assume that $\ell_j>0$. Since, by~(\ref{def121}), $u(x)=(0,0)$ for every $x\in \partial B_{1/2}$, as well as for every $x\in [-1,-1/2]\times\{0\}$, the assumption $\ell_j>0$ implies that $\gamma((a_j,b_j))$ is a continuous curve inside $B_1$ with endpoints $P_j$ and $Q_j$ which divides $B_1$ in two parts, one of which contains both $P$ and the ball $B_{1/2}$, while the other contains the curve $\gamma((a_i,b_i))$, since $\gamma$ is injective.\par
As a consequence, for any point $R\in \gamma((a_i,b_i))$, the segment joining $R$ with the origin intersects $\gamma((a_j,b_j))$. Let then $R$ be any point in $\gamma((a_i,b_i))$, and let $\theta\in [0,2\pi]$ and $0\leq \eps< 1/2$ be such that $R=\big((1-\eps)\cos\theta,(1-\eps)\sin\theta\big)$. There exists some $\eps<\eps'<1/2$ such that the point $R'=\big((1-\eps')\cos\theta,(1-\eps')\sin\theta\big)$ belongs to $\gamma((a_j,b_j))$. Keeping in mind~(\ref{def121}), we derive that $u(R')$ is on the left of $u(R)$. Since $R\in \gamma((a_i,b_i))$ is generic, the thesis follows.
\end{proof}

Let us now proceed with our definition. We let $\delta''\ll\delta'$ be a constant much smaller than the distance between any two points in the set $\big\{\widetilde{\u P}_i,\,\widetilde{\u Q}_i,\, 1\leq i\leq N\big\}$. Then, we define $N$ distinct constants $\tilde \ell_i$ so that for every $i$ one has $|\tilde\ell_i-\ell_i|< \delta''$ and in addition, whenever $\widetilde{\u P}_i\widetilde{\u Q}_i\subseteq \widetilde{\u P}_j\widetilde{\u Q}_j$, one has $\tilde\ell_j < \tilde\ell_i$. This is possible thanks to Lemma~\ref{leleft}. Now, for every $1\leq i \leq N$ we select $c_i\in (a_i,b_i)$ so that $u_1(\gamma(c_i))=\ell_i$, where we write $u=(u_1,u_2)$. Keep in mind that $u_2\circ\gamma\equiv 0$ on the intervals $[a_i,b_i]$. And finally, we select $\delta'''\ll \delta''$ so that, for every $1\leq i\leq N$, one has $(c_i-2\delta''',c_i+2\delta''')\comp (a_i,b_i)$ and $u_1\circ \gamma (c_i-2\delta''',c_i+2\delta''') \comp [\tilde\ell_i-\delta'',\tilde\ell_i+\delta'']$.\par

We are now in position to define $\varphi$ on the intervals $[a_i,b_i]$. Let us fix $1\leq i\leq N$, and keep in mind that
\[
u_1(\gamma(a_i))=u_1(\gamma(b_i))=1 = \varphi_1(a_i)=\varphi_1(b_i)\,.
\]
Let us assume for a moment, just to fix the ideas, that $\varphi_2(a_i)>\varphi_2(b_i)$, that is, the point $\widetilde{\u P}_i$ is above $\widetilde{\u Q}_i$. In the interval $[a_i,c_i-2\delta''']$, we let then $\varphi$ be the function such that $\varphi_1 = u_1\circ\gamma$, and $\varphi_2$ is affine with $\varphi_2(c_i-2\delta''')=\varphi_2(a_i)-\delta''$. Similarly, in the interval $[c_i+2\delta,b_i]$ we let $\varphi$ be the function such that $\varphi_1=u_1\circ\gamma$, and $\varphi_2$ is affine with $\varphi_2(c_i+2\delta)=\varphi_2(b_i)+\delta''$. In words, in the interval $[a_i,c_i-2\delta''']$ (resp., $[c_i+2\delta''',b_i]$) the function $\varphi$ is behaving exactly as $u$ horizontally, while vertically it is slowly going down (resp., up). Then, in the interval $[c_i-2\delta''',c_i-\delta''']$, as well as in the interval $[c_i+\delta''',c_i+2\delta''']$, the function $\varphi$ is affine with
\begin{align*}
\varphi(c_i-\delta''')=\big(\tilde\ell_i, \varphi_2(a_i)-2\delta''\big)\,, && \varphi(c_i+\delta''')=\big(\tilde\ell_i, \varphi_2(b_i)+2\delta''\big)\,.
\end{align*}
And finally, in the interval $[c_i-\delta''',c_i+\delta''']$, the function $\varphi$ is affine, hence its image is the vertical segment $\{\tilde\ell_i\}\times [\varphi_2(b_i)+2\delta'',\varphi_2(a_i)-2\delta'']$. The construction of $\varphi$ in the interval $[a_i,b_i]$ is depicted in Figure~\ref{a_ib_i}. If $\widetilde{\u P}_i$ is below $\widetilde{\u Q}_i$, so $\varphi_2(a_i)<\varphi_2(b_i)$, the definition of $\varphi$ is exactly the same, except that the second coordinate of $\varphi$ is slowly increasing from $a_i$ to $b_i$, instead of slowly decreasing.\par

\begin{figure}[thbp]
\scalebox{1}{
\input{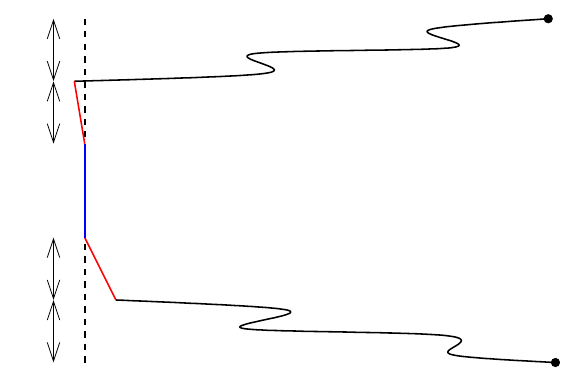_t}
}
\caption{Construction of the function $v$ in the interval $[a_i,b_i]$.}\label{a_ib_i}
\end{figure}

It is easy to observe that, in the whole $[0,1]$, the constructed function $\varphi$ satisfies~(\ref{Linfty01}). Hence, we have proved that $u$ is an NCL function, so by~(\ref{unotINV}) our example is concluded.

\begin{remark}
The example that we presented in this section is an NCL function which is not INV, thus also not NC. It is also possible to provide a function which is both NCL and INV, but still not NC. For instance, this is what happens to the function presented in~\cite[Section~5.2]{DPP}. It is already proved there that such function is INV and not NC. The proof that it is also NCL is very similar to what we have done in this section, in particular for that function the analogous of Lemma~\ref{leleft} holds.
\end{remark}

\section{The NCL property together with $\det Du > 0$ a.e. implies NC\label{sec:bright}}

This section is devoted to show that, under the assumption that $\det Du>0$ a.e., the properties NC and NCL are equivalent. Since the NC property clearly implies the NCL one, we basically have to show that if the determinant of $Du$ is strictly positive almost everywhere then NCL is a sufficient condition to get NC. The result is then the following one.

\begin{theorem}[NCL and $\det Du>0$ a.e. $\Longrightarrow NC$]\label{NCL>0NC}
Let $u\in W^{1,1}(\QQ;\u\QQ)$ be an NCL function coinciding with the identity on $\partial\QQ$ and such that $\det Du>0$ a.e.\,. Then, $u$ is also NC.
\end{theorem}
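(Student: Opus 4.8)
The plan is to verify the NC property directly. Fix a grid $\Gamma=\gamma_1\cup\cdots\cup\gamma_m$ as in Definition~\ref{defNC} together with $\eps>0$; we must produce a continuous injective $v:\Gamma\to\u\Q$ with $v=u$ (hence the identity) on $\partial\Q\cap\Gamma$ and $\|v-u\|_{L^\infty(\Gamma)}<\eps$. By Remark~\ref{remidb} we may assume $u$ equals the identity near $\partial\Q$, so near the boundary the choice $v=u$ already works. Since $u$ is continuous, hence uniformly continuous, on the compact set $\Gamma$, we may freely replace $\Gamma$ by any grid obtained from it through a homeomorphism close to the identity (an injective approximation on the new grid pulls back to one on $\Gamma$), and we may replace any of its arcs by a nearby arc avoiding the $\H^1$-negligible set $\NN_u$. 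This lets us put $\Gamma$ in general position: smooth arcs, finitely many transversal crossings, all vertices being points of approximate differentiability of $u$ at which $\det Du>0$, with pairwise distinct images under $u$, and avoiding the Lebesgue-negligible sets $u^{-1}(\u P)$ for the finitely many points $\u P$ arising below. The key is that the vertices are the only source of difficulty: once small disks around them are removed, $\Gamma$ becomes a finite disjoint union of injective Lipschitz arcs, and for such a family the NCL property already yields a jointly injective approximation — bridge the arcs into a single injective Lipschitz arc $\sigma\subseteq\Q\setminus\NN_u$, apply NCL to $\sigma$, and restrict. Everything thus reduces to a local surgery across each vertex.

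Fix a vertex $x_0$, the transversal crossing of two curves $\gamma_i,\gamma_j$, and set $D=B(x_0,\rho)$ for a radius $\rho$ chosen in the full-measure set on which $\partial D\subseteq\Q\setminus\NN_u$ and $u|_{\partial D}\in W^{1,1}$ (so that $u|_{\partial D}$ is continuous), and small enough that $u(\overline D)\subseteq B(u(x_0),\eps')$ with $\eps'\ll\eps$. Inside $D$ the grid consists of four arcs, the arms, joining $x_0$ to points $p_1,p_2,p_3,p_4$ lying in this cyclic order on $\partial D$, the two arms of $\gamma_i$ alternating with the two of $\gamma_j$. Writing $\u p_k=u(p_k)$, the task becomes the following: $v$ having already been fixed, close to $u$, on $\Gamma\setminus\bigcup D$, with prescribed values $v(p_k)$ near $\u p_k$, extend it across $\overline D$ so that the two image arcs $v(\gamma_i\cap\overline D)$, joining $v(p_1)$ to $v(p_3)$, and $v(\gamma_j\cap\overline D)$, joining $v(p_2)$ to $v(p_4)$, are embedded, stay $\eps$-close to $u$ on the respective arms, avoid the rest of $v(\Gamma)$, and meet each other in exactly one point near $u(x_0)$ — this last requirement being forced by $\gamma_i\cap\gamma_j=\{x_0\}$, so that $v(x_0)$ is the only admissible intersection.

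The hypothesis $\det Du>0$ a.e.\ is used only at this step, to ensure that the surgery is not topologically obstructed. The obstruction that survives NCL — the one exhibited in Section~\ref{sec:ctx} — comes from $u$ collapsing a region, which allows a point lying inside some loop in the domain to be sent outside the image loop (a failure of INV); such a collapse can create, along a curve through the vertex, a Jordan-curve parity mismatch that no $\eps$-small perturbation removes. Under $\det Du>0$ a.e.\ this cannot occur: by the area inequality $u$ does not send a set of positive measure onto a negligible one, and the degree identity $\int_D\det Du=\int_{\u\Q}\deg(\u P,u,D)\,d\u P$ — whose left-hand side is then strictly positive — together with the NCL property applied to the circle $\partial D$ pins down the inside/outside relationships near $u(x_0)$ accurately enough that the four image arms can be resolved into four embedded arcs issuing from a common point, pairwise disjoint apart from it, each $\eps$-close to $u$ and avoiding $v(\Gamma\setminus\bigcup D)$. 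The construction of these arcs is then an explicit bookkeeping in the spirit of Section~\ref{sec:ctx}, each arc being routed through a thin neighbourhood of the corresponding image arm of $u$ and the $\gamma_i$- and $\gamma_j$-arcs being made to cross exactly once near $u(x_0)$.

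It remains to assemble. The finitely many local surgeries glue onto the $v$ already defined on $\Gamma\setminus\bigcup D$ because the prescribed values $v(p_k)$ lie close to $u(p_k)$ and the surgery is free to match them; distinct arms have disjoint $v$-images outside small neighbourhoods of the vertices — here the general-position perturbation, together with the fact that $u^{-1}(\u P)$ is Lebesgue-negligible, forces distant parts of $\Gamma$ to land near a given $u(x_0)$ only along a thin set, leaving room — so that the only coincidences of $v$ are the prescribed single crossings inside the disks; the identity boundary condition is automatic, and $\|v-u\|_{L^\infty(\Gamma)}<\eps$ holds by construction. I expect the main obstacle to be precisely the local vertex step: extracting from $\det Du>0$ a.e.\ enough topological control at the scale of a fixed, though arbitrarily small, disk around a vertex to rule out the parity-obstructed configurations. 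Approximate differentiability governs only infinitesimal scales, along a sparse sequence of radii, so the argument must go through the robust degree identity and the NCL property on circles; and it must simultaneously dispose of the possibility that remote parts of the grid accumulate near $u(x_0)$, where the nullity of point-preimages, itself a consequence of $\det Du>0$, is what makes a generic perturbation of $\Gamma$ effective.
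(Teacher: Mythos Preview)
Your outline identifies the right difficulty but does not resolve it, and several of the reductions you take for granted already hide the key lemma. You assert as ``general position'' that the vertices can be taken to have pairwise distinct $u$-images and that the arcs can be perturbed off the sets $u^{-1}(\u P)$; neither is automatic, since the vertices are determined by the grid rather than freely chosen, and a Lebesgue-null planar set can contain Lipschitz curves, so a genericity argument needs more input. What makes these claims true is precisely Lemma~\ref{gmmg}: if $x\neq y$ are Lebesgue points of $Du$ with positive Jacobian at each, then $u(x)\neq u(y)$. Its proof applies NCL to a single curve containing a short segment through $x$ whose $u$-image is nearly vertical and a short segment through $y$ whose $u$-image is nearly horizontal; no injective $L^\infty$-close curve can avoid the resulting cross. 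This lemma---not the degree identity---is where the hypothesis $\det Du>0$ a.e.\ is actually used.

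As for the vertex step you flag as the main obstacle: the identity $\int_D\det Du=\int\deg(\cdot,u,D)$ gives only integral positivity of the degree and says nothing about the cyclic order of four specified boundary points, so it cannot by itself rule out the obstructed configuration. The paper does not use degree theory here. Instead it fixes a fine \emph{good arrival grid} $\widetilde\G\subseteq\u\QQ$ (Lemma~\ref{approx36}) so that the finite set $\G\cap u^{-1}(\widetilde\G)$ consists of Lebesgue points with positive Jacobian and hence, by Lemma~\ref{gmmg}, has pairwise distinct $u$-images on $\widetilde\G$. The approximation $v$ then sends each ``simple segment'' of $\G$ to a generalised segment inside the corresponding arrival rectangle, and global injectivity reduces to finitely many order checks of four distinct points on a rectangle boundary. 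Each such check (Claim~1 in the proof, and the analogous argument for disjoint simple segments) is settled by applying NCL to one test curve containing two disjoint arcs whose $u$-images stay in the interior of the rectangle except at the endpoints. Your worry about remote parts of $\Gamma$ accumulating near $u(x_0)$ is absorbed by the same mechanism: any such arc is itself a simple segment in that arrival rectangle, and its non-interference is another order check of the same type---so no separate ``room'' argument is needed.
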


\begin{remark}\label{thatsobv}
Notice that the assumption that the function $u$ coincides with the identity on $\partial\QQ$ is just made for simplicity. Of course, given any bi-Lipschitz bijection $\Phi:\R^2\to\R^2$, it is completely equivalent to consider functions in $W^{1,1}(\QQ;\Phi(\QQ))$ which coincide with $\Phi$ on the boundary.
\end{remark}

To show this result, we need to introduce the concept of good starting grid and good arrival grid. These concepts were already used in~\cite{DPP}, our definition is a bit simpler since we do not need here all the properties which were needed there.

\begin{defin}[Good starting and arrival grids]
Let $u\in W^{1,1}(\QQ;\u\QQ)$ be a Sobolev function, let $\NN_u$ be the set of its discontinuity points, and let $\gamma:[0,1]\to\QQ\setminus \NN_u$ be a Lipschitz curve. We say that $\gamma$ is an \emph{admissible curve} if $\gamma(t)$ is a Lebesgue point for $Du$ for $\H^1$-a.e. $t$, and the curve $u\circ \gamma$ belongs to $W^{1,1}([0,1];\u\QQ)$.\par
Given $K\in\N$, we call \emph{$K$-grid} the set $\G=\G(K)$ defined as
\[
\G(K) = \bigcup_{i=0}^K [0,1] \times \{i/K\} \ \cup\ \bigcup_{j=0}^K \{j/K\} \times [0,1] \subseteq\QQ\,,
\]
and we say that the grid $\G=\G(K)$ is a \emph{good starting grid} if each segment contained in $\G$ is an admissible curve.\par
Given a good starting grid $\G=\G(K)$, a small constant $\eta\ll 1/K$, and some coordinates $0=x_0<x_1<x_2\, \cdots\, ,\, x_N=1$ and $0=y_0<y_1<y_2\, \cdots < y_M=1$ with $x_{n+1}-x_n<\eta$ and $y_{m+1}-y_m<\eta$ for every $0\leq n< N$ and $0\leq m<M$, we say that
\begin{equation}\label{defgag}
\widetilde\G = \bigcup_{n=1}^N \{x_n\}\times [0,1]\ \cup\ \bigcup_{m=0}^M [0,1] \times \{y_m\}\subseteq\u\QQ
\end{equation}
is a \emph{good arrival grid associated with $\G$ and with side-length $\eta$} if $u^{-1}(\widetilde\G)\cap \G\cap \QQ^\circ$ is done by finitely many points, each of which is a Lebesgue point of $Du$ and not a vertex of the grid $\G$, and moreover for any such point $P\in u^{-1}(\widetilde\G)\cap \G\cap \QQ^\circ$ the point $\u P=u(P)$ is not a vertex of the grid $\widetilde\G$ and, calling $\tau\in\S^1$ the direction of the side of $\G$ containing $P$, the tangential derivative $D_\tau(P)$ is not parallel to the side of $\widetilde\G$ containing $\u P$.
\end{defin}

An important fact is that good arrival grids always exist. More precisely, we have the following property, whose proof is a simple variant of the proof of~\cite[Lemma~3.6]{DPP}.

\begin{lemma}\label{approx36}
Let $u\in W^{1,1}(\QQ;\u\QQ)$, let $\G=\G(K)$ be a good starting grid, let $\Sigma\subseteq\G$ be a $\H^1$-negligible set, and let $\eta\ll 1/K$ be fixed. Then, there exists a good arrival grid $\widetilde\G$ associated with $\G$, with side-length $\eta$, and such that $u^{-1}(\widetilde\G)\cap \Sigma=\emptyset$.
\end{lemma}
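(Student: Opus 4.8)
\textbf{Proof plan for Lemma~\ref{approx36}.}

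The strategy is to build the arrival grid coordinate by coordinate, choosing the $N$ vertical lines $\{x_n\}\times[0,1]$ and the $M$ horizontal lines $[0,1]\times\{y_m\}$ one after the other, and to verify that for \emph{almost every} choice of each level $x_n$ (resp. $y_m$) the preimage under $u$ of that level line, intersected with the finitely many segments making up $\G$, avoids the bad set and lands only in good points. The key analytic input is that $u\circ\gamma\in W^{1,1}([0,1];\u\QQ)$ for each segment $\gamma\subseteq\G$ (since $\G$ is a good starting grid): thus the scalar function $t\mapsto u_1(\gamma(t))$ and $t\mapsto u_2(\gamma(t))$ are absolutely continuous, and I may invoke the Sard-type / coarea fact that for a.e. value $c$ the level set $\{t:\ (u\circ\gamma)_k(t)=c\}$ is finite, consists only of points where the relevant derivative of $u\circ\gamma$ is nonzero (hence in particular only of Lebesgue points of $Du$, after discarding the null set of non-Lebesgue parameters), and avoids any prescribed $\H^1$-null subset of $[0,1]$. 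Concretely, for a fixed horizontal segment one looks at $u_2\circ\gamma$; its derivative is $(D_\tau u)\cdot e_2$ up to the Lipschitz constant of the parametrization, so ``derivative nonzero'' is exactly the non-parallelism condition in the definition of good arrival grid.

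The steps, in order, are the following. \emph{Step 1.} Enlarge the bad set: let $\Sigma'\subseteq\G$ be the union of $\Sigma$, the (at most countable, hence $\H^1$-null after removing the vertices, which are finitely many points anyway) set of vertices of $\G$, and the $\H^1$-null set of parameters on each segment which are not Lebesgue points of $Du$; $\Sigma'$ is still $\H^1$-negligible. \emph{Step 2.} Fix the $M+1$ horizontal levels $y_0=0<y_1<\dots<y_M=1$: note these are levels in the \emph{target} for the horizontal lines of $\widetilde\G$, but in fact the horizontal lines $[0,1]\times\{y_m\}$ only need $y_0=0$ and $y_M=1$ prescribed and the rest free with $y_{m+1}-y_m<\eta$; for each, the preimage condition is a condition on $u_2\circ\gamma$ over the vertical segments of $\G$ — wait, more carefully: $u^{-1}([0,1]\times\{y_m\})\cap\G$ is the set where the \emph{second} coordinate of $u$ restricted to $\G$ equals $y_m$, so this is governed by $u_2\circ\gamma_s$ for every segment $\gamma_s$ of $\G$ (horizontal and vertical alike). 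By the coarea/Sard fact applied to the finitely many functions $u_2\circ\gamma_s$, the set of bad values $y\in(0,1)$ — those for which the level set is infinite, or meets $\Sigma'$, or contains a point where the tangential derivative is parallel to the horizontal direction, i.e. $D_\tau u\cdot e_2=0$ — is Lebesgue-null; hence in any subinterval of length $<\eta$ one can pick a good $y_m$, and one does this successively to obtain $0=y_0<\dots<y_M=1$ with all gaps $<\eta$. \emph{Step 3.} Repeat the identical argument with roles of coordinates swapped to pick the vertical levels $0=x_0<x_1<\dots<x_N=1$, now looking at the finitely many functions $u_1\circ\gamma_s$ and the bad-value set for $u_1$; here one must additionally discard, for each already-chosen $y_m$, the (finitely many) values of $x$ at which some preimage point would coincide with a preimage point of a $y_m$-line or would be a vertex of $\widetilde\G$ — still only finitely many forbidden $x$, so a good $x_n$ remains in each gap. \emph{Step 4.} Assemble $\widetilde\G$ by \eqref{defgag} and check each clause of the definition of good arrival grid directly from the choices made: finiteness of $u^{-1}(\widetilde\G)\cap\G\cap\QQ^\circ$, each such point a Lebesgue point of $Du$ and not a vertex of $\G$ (by $\Sigma'$), each image $\u P$ not a vertex of $\widetilde\G$ (by Step 3), and $D_\tau(P)$ not parallel to the containing side of $\widetilde\G$ (by the derivative-nonzero clause); and $u^{-1}(\widetilde\G)\cap\Sigma=\emptyset$ since $\Sigma\subseteq\Sigma'$ is avoided.

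The main obstacle — and the only place real care is needed — is \emph{Step 2/3's coarea claim}: one must be sure that for an absolutely continuous $f=u_k\circ\gamma_s$ on $[0,1]$, for a.e. $c\in\R$ the level set $f^{-1}(c)$ is finite, is contained in $\{f'\neq 0\}$, and avoids a given null set; this is standard (it follows from the one-dimensional Sard lemma for $W^{1,1}$/absolutely continuous functions, using that $f(\{f'=0\})$ and $f(\text{any null set})$ are Lebesgue-null by the area formula / absolute continuity, and that $\{f'\neq 0\}$ is a countable union of intervals on which $f$ is a homeomorphism, so generic fibers are finite), but it is exactly the ``simple variant of the proof of \cite[Lemma~3.6]{DPP}'' alluded to, and the bookkeeping of which null sets to throw away on which of the finitely many segments of $\G$, and of the finitely many forbidden coincidence values in Step 3, is the part that must be written out carefully rather than waved through.
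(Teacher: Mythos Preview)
Your proposal is correct and follows essentially the same route as the paper's proof: identify the null ``bad'' subset of $\G$ (the set $\Sigma$, the vertices, the non-Lebesgue points of $Du$), push it forward by $u$ along each segment and project to get null sets of bad target levels together with the Sard-type set of critical values, then pick the grid levels in two passes, the second pass additionally avoiding the finitely many values that would make an image point a vertex of $\widetilde\G$. The paper does $x$'s first and then $y$'s while you do the reverse, which is immaterial. One small caveat: your parenthetical justification that ``$\{f'\neq 0\}$ is a countable union of intervals on which $f$ is a homeomorphism'' is not literally true for a general absolutely continuous $f$; the paper instead argues that at any preimage point $P$ outside the bad sets the tangential derivative is nonzero, so $P$ is isolated in the (closed, hence compact) level set, which gives finiteness directly --- or one may invoke the Banach indicatrix. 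You already flag this as the spot needing care, so this is a cosmetic fix rather than a gap.
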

\begin{proof}
Let us call $A$ the set of vertices of $\G$, i.e. the points $(p/K,q/K)$ with $1\leq p,\,q\leq K-1$, and let $\u A^1$ and $\u A^2$ be the horizontal and the vertical projection of $u(A)$. Notice that $\u A^1$ (resp., $\u A^2$) is a finite set of abscissae (resp. ordinates), since $A$ is a finite set and $u$ is continuous at any of its points.\par

Let now $\gamma$ be a horizontal or vertical segment contained in $\G$, with both endpoints in $\partial\QQ$. Let us define $B_\gamma$ the set of points of $\gamma$ which are not Lebesgue points for $Du$, or which belong to $\Sigma$. By definition, $B_\gamma$ is a $\H^1$-negligible subset of $\gamma$. Since the restriction of $u$ to $\gamma$ belongs to $W^{1,1}$, also $u(B_\gamma)\subseteq \u\QQ$ is $\H^1$-negligible, hence so are also its projections $\u B^1_\gamma$ and $\u B^2_\gamma$.\par

Let then $C^1_\gamma$ (resp., $C^2_\gamma$) the set of points $P$ in $\gamma\setminus B_\gamma$ such that the first (resp., second) component of $D_\tau u(P)$ is zero, where $\tau$ is the direction of $\gamma$, so either $(1,0)$ or $(0,1)$. The sets $C^1_\gamma$ and $C^2_\gamma$ need not to be negligible. Nevertheless, the first projection $\u C^1_\gamma$ of $u(C^1_\gamma)$ is $\H^1$-negligible, as well as the second projection $\u C^2_\gamma$ of $u(C^2_\gamma)$. This is a very standard fact, the easy proof can be found for instance in~\cite[Lemma~3.6]{DPP}.\par

We call now $\u B^1$ (resp., $\u B^2$, $\u C^1$, $\u C^2$) the union of the sets $\u B^1_\gamma$ (resp., $\u B^2_\gamma$, $\u C^1_\gamma$, $\u C^2_\gamma$) for all the horizontal and vertical segments $\gamma$ contained in $\G$. Since these segments are finitely many, we have
\[
\H^1(\u B^1)=\H^1(\u B^2)=\H^1(\u C^1)=\H^1(\u C^2)=0\,.
\]

Let now take any $x\in (0,1)\setminus (\u A^1\cup \u B^1\cup \u C^1)$, and take some point $P\in u^{-1}(\{x\}\times [0,1])\cap \G$. Since $x\notin \u A^1$ the point $P$ is not a vertex of $\G$, thus there is only one segment $\gamma$ contained in $\G$ and with endpoints in $\partial\QQ$ which contains $P$. Since $x\notin (\u B^1_\gamma\cup \u C^1_\gamma)$, the point $P$ does not belong to $\Sigma$ and it is a Lebesgue point for $Du$, and $D_\tau u (P)$ has a non-zero first component, being $\tau$ again the direction of the segment $\gamma$. This means that $P$ is the unique point of $u^{-1}(\{x\}\times [0,1])$ in a suitably small neighborhood of $P$ in $\gamma$, hence the set $u^{-1}(\{x\}\times [0,1])\cap \G$ is finite.\par

We can then select numbers $0=x_0<x_1 < x_2 \,\cdots\, < x_{N-1}<x_N=1$ such that no $x_i$ belongs to $\u A^1\cup \u B^1\cup \u C^1$, except $x_0$ and $x_N$, and such that $x_{i+1}-x_i < \eta$ for every $0\leq i<N$. By construction, the set
\[
D= \G \cap \bigcup_{i=1}^{N-1} u^{-1}\big(\{x_i\} \times (0,1) \big)
\]
is finite, and since $u$ is continuous on every point of $\G$ we deduce that also $\u D=u(D)$ is finite.\par

The very same argument as before implies that for any $y\in (0,1)\setminus (\u A^2\cup \u B^2\cup \u C^2)$ the set $u^{-1}([0,1]\times \{y\})\cap \G$ is finite. Hence, we can select numbers $0=y_0<y_1< y_2\, \cdots\, < y_{M-1}<y_M=1$ such that no $y_j$ belongs to $\u A^1\cup \u B^1\cup \u C^1$, except $y_0$ and $y_M$, that $y_{j+1}-y_j<\eta$ for every $0\leq j<M$, and also that no $y_j$ belongs to the second projection of the finite set $\u D$.\par

Using the coordinates $\{x_i\}$ and $\{y_i\}$ to define the grid $\widetilde\G$ as in~(\ref{defgag}), the fact that $\widetilde\G$ is a good arrival grid associated to $\G$ and with side-length $\eta$ is true by constrution. In particular, the fact that the points of $\widetilde\G\cap u(\G)\cap\u\QQ^\circ$ are not vertices of $\widetilde \G$ is true because the $y_j$ have been chosen not to belong to the second projection of $\u D$.
\end{proof}

A simple but fundamental technical fact needed to prove Theorem~\ref{NCL>0NC} is the following.
\begin{lemma}\label{gmmg}
Let $u\in W^{1,1}(\QQ;\u\QQ)$ be an NCL function, coinciding with the identity on $\partial\QQ$, and let $x\neq y \in\QQ\setminus\NN_u$ be two Lebesgue points for $Du$ with $\det Du(x)>0$ and $\det Du(y)>0$. Then, $u(x)\neq u(y)$.
\end{lemma}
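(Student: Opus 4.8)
The plan is to argue by contradiction. Assume $u(x)=u(y)=:\u P$; I will build one injective Lipschitz (in fact admissible) curve $\gamma$ on which $u$ admits no uniform approximation by a continuous injective map, contradicting NCL. A preliminary remark first: $\u P$ must lie in the interior $\u\QQ^\circ$. Indeed, the good-segment estimate below produces, for a generic direction $e_\alpha=(\cos\alpha,\sin\alpha)$, a short segment through (nearly) $x$ whose image stays within $o(\rho)$ of the genuine segment $s\mapsto\u P+s\,Du(x)e_\alpha$, $|s|\le\rho$; since this image lies in $\u\QQ$, were $\u P\in\partial\u\QQ$ we would need $Du(x)e_\alpha$ to be parallel to the side of $\u\QQ$ through $\u P$ for a.e.\ $\alpha$, which is impossible because $\det Du(x)>0$ makes $Du(x)$ invertible.

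The analytic heart is the construction of \emph{good segments}. Fix $\rho>0$ small (constrained further below) with $B(x,2\rho)$, $B(y,2\rho)$ disjoint and $\comp\QQ$. Using that $x$ is a Lebesgue point of $Du$ together with the continuity of $u$ at $x$ (as $x\notin\NN_u$), a standard dyadic-ball Poincaré--Wirtinger argument gives $\tfrac1{|B(x,r)|}\int_{B(x,r)}|u(z)-u(x)-Du(x)(z-x)|\,dz=o(r)$ as $r\to0$. Slicing $B(x,2\rho)$ by segments of length $2\rho$ parallel to $e_\alpha$ contained in a thin tube of width $\eta_0\ll\rho$ around the $e_\alpha$-line through $x$, and combining Fubini for Sobolev functions with Chebyshev's inequality, one obtains a segment $\sigma_x\subseteq B(x,\rho)\setminus\NN_u$ which is admissible, on which $u$ is continuous (hence absolutely continuous), and such that
\[
\sup_{z\in\sigma_x}\dist\!\big(u(z),\,\Sigma_x\big)=o(\rho)\,,\qquad \Sigma_x:=\big\{\,\u P+s\,d_1:\ |s|\le\rho\,\big\}\,,\quad d_1:=Du(x)e_\alpha\,,
\]
where $\eta_0=\eta_0(\rho)\to0$ is chosen to make the left-hand side $o(\rho)$; the continuity of $u$ at $x$ anchors the value of $u$ at the point of $\sigma_x$ nearest $x$ within $o(\rho)$ of $\u P$, which pins down the affine comparison. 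Build $\sigma_y$, $d_2:=Du(y)e_\beta$, $\Sigma_y:=\{\u P+s\,d_2:|s|\le\rho\}$ the same way. Since $Du(x),Du(y)$ are invertible, for a.e.\ pair $(\alpha,\beta)$ the vectors $d_1,d_2$ are nonzero and not parallel and, along a suitable sequence $\rho\to0$, both good segments at scale $\rho$ exist; fix such $\alpha,\beta$, let $\theta\in(0,\pi)$ be the angle between $d_1$ and $d_2$, and set $\lambda:=\min(|d_1|,|d_2|)>0$.

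Now join $\sigma_x$ and $\sigma_y$ by an injective Lipschitz arc contained in $\QQ^\circ\setminus\NN_u$ — possible because the $\H^1$-negligible set $\NN_u$ does not disconnect $\QQ^\circ$ and can be avoided by a polygonal path built of generic lines — obtaining an injective Lipschitz curve $\gamma\subseteq\QQ\setminus\NN_u$ containing $\sigma_x$ and $\sigma_y$ as disjoint sub-arcs. Pick $\eps>0$ (to be chosen after $\rho$) and apply the NCL property to $\gamma$ and $\eps$: there is a continuous injective $v\colon\gamma\to\u\QQ$ with $\|u-v\|_{L^\infty(\gamma)}<\eps$. Writing $w:=\eps+o(\rho)$ for the distance by which $v$ on $\sigma_x$ (resp.\ on $\sigma_y$) may fail to lie on $\Sigma_x$ (resp.\ $\Sigma_y$), we see that $v(\sigma_x)$ is a path contained in the $w$-neighbourhood of $\Sigma_x$ joining two points within $w$ of the two endpoints $\u P\pm\rho\,d_1$ of $\Sigma_x$, and likewise $v(\sigma_y)$ is a path in the $w$-neighbourhood of $\Sigma_y$ joining points within $w$ of $\u P\pm\rho\,d_2$.

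Finally there is an elementary topological fact: if $\Sigma_x,\Sigma_y$ are two segments of half-length $\ge\lambda\rho$ crossing transversally at $\u P$ with angle $\theta$, each thickened by $w$, then — provided $w\le c_0\,\lambda\rho\sin\theta$ for a universal constant $c_0$ — any path in the first thickened segment joining (near-)endpoints of $\Sigma_x$ must meet any path in the second thickened segment joining (near-)endpoints of $\Sigma_y$. (In the linear coordinates turning $\Sigma_x,\Sigma_y$ into orthogonal axes this reduces to the classical statement that in a square a path joining the left and right sides meets a path joining the top and bottom sides.) To apply it, first choose $\rho$ small enough that the $o(\rho)$ error in the good-segment estimates is below $\tfrac12 c_0\lambda\rho\sin\theta$ (legitimate since $\lambda,\theta$ are now fixed numbers), then choose $\eps<\tfrac12 c_0\lambda\rho\sin\theta$, so $w\le c_0\lambda\rho\sin\theta$. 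We conclude $v(\sigma_x)\cap v(\sigma_y)\ne\emptyset$; but $\sigma_x\cap\sigma_y=\emptyset$ and $v$ is injective, so $v(\sigma_x)\cap v(\sigma_y)=\emptyset$ — a contradiction, proving $u(x)\ne u(y)$. The mechanism is transparent; the genuine work — and the main obstacle — is the good-segment construction (a Lipschitz segment on which $u$ is simultaneously continuous and $o(\rho)$-close to its linearization about $x$, which forces the delicate choice $\eta_0\ll\rho$), together with the bookkeeping that fixes $\rho$ before $\eps$ so that all errors stay below the fixed threshold $c_0\lambda\rho\sin\theta$.
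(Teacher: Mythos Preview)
Your argument follows essentially the same strategy as the paper's: linearize $u$ near $x$ and near $y$ to produce two short disjoint segments whose images under $u$ are (approximately) two transversal segments through the common value $\u P$, embed both in a single injective Lipschitz curve $\gamma\subseteq\QQ\setminus\NN_u$, and derive a contradiction with NCL via the crossing argument. The paper's execution is cleaner on two points. First, instead of your Fubini--Chebyshev slicing it invokes directly the $L^\infty$ estimate $\|u-\omega_x\|_{L^\infty(B(x,r))}=o(r)$ at Lebesgue points of $Du$ (cited from~\cite{HP}), which holds on the whole ball and lets one take the segments exactly through $x$ and $y$. Second, by a rotation in the target it arranges the two image segments to be the coordinate segments $\{0\}\times[-\ell,\ell]$ and $[-\ell,\ell]\times\{0\}$, so the crossing contradiction is immediate and no angle $\theta$ or constant $c_0$ enters.

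One technical point you should tighten: your good-segment estimate as written, $\sup_{z\in\sigma_x}\dist\big(u(z),\Sigma_x\big)=o(\rho)$, is too weak for the later claim that the endpoints of $v(\sigma_x)$ lie near $\u P\pm\rho\,d_1$. Closeness to the \emph{set} $\Sigma_x$ does not preclude $u|_{\sigma_x}$ from oscillating and never reaching the ends of $\Sigma_x$. What you actually need (and what slicing applied to $\int_{B(x,\rho)}|Du-Du(x)|$ rather than to $\int_{B(x,\rho)}|u-\omega_x|$ gives, after integrating along the absolutely continuous $u|_{\sigma_x}$ and using the anchor at $x$) is the pointwise bound $\sup_{z\in\sigma_x}|u(z)-\omega_x(z)|=o(\rho)$. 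With that in hand your argument goes through.
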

\begin{proof}
Let us assume, by contradiction, that $u(x)=u(y)$. Moreover, without loss of generality and for simplicity of notation, let us assume that $u(x)=u(y)=(0,0)\in\u\Q$. For brevity of notation we write $\M_x=Du(x)$ and $\M_y=Du(y)$, and we set
\begin{align*}
\omega_x(z)=u(x)+ \M_x(z-x)\,, && \omega_y(z)=u(y)+ \M_y(z-y)\,.
\end{align*}
Since $\det \M_x\neq 0$ and $\det\M_y\neq 0$, and since $\NN_u$ is $\H^1$-negligible and it does not contain $x$ nor $y$, then up to a small rotation we can assume that
\begin{equation}\label{goodhorver}
\omega_x^{-1}\big(\{0\}\times\R\big) \cup \omega_y^{-1}\big(\R\times\{0\}\big) \subseteq \R^2 \setminus\NN_u\,.
\end{equation}
Observe that $\omega_x^{-1}(\{0\}\times\R)$ and $\omega_y^{-1}(\R\times\{0\})$ are two lines passing through $x$ and $y$ respectively.\par

Since $x$ is a Lebesgue point for $Du$, it is simple to observe that
\begin{equation}\label{Linftyclose}
\forall \, \eps\ \exists\, \bar r=\bar r(\eps)< {\rm dist}(x,\partial\QQ)\, :\ \forall\, r<\bar r,\ \| u-\omega_x \|_{L^\infty(B(x,r))} < \eps r\,,
\end{equation}
see for instance~\cite[Lemma~4.3]{HP}, or~\cite[Lemma~4.2]{PR} for mappings of bounded variation.\par

Since $\det \M_x>0$, we have $\eps_1:=\min\{\M_x(v)|,\, |v|=1\}>0$. Let now $\ell\ll 1$ be fixed, and let us consider the segment $\u V=\{0\}\times [-\ell,\ell]\in\u\QQ$ and the corresponding $V=\omega_x^{-1}(\u V)$, which is a segment centered at $x$ on which $u$ is continuous by~(\ref{goodhorver}). The estimate~(\ref{Linftyclose}) yields that, if
\[
\ell < \eps_1 \bar r(\eps_1/2)\,,
\]
then
\begin{equation}\label{uom1}
\|u-\omega_x\|_{L^\infty(V)} \leq \frac\ell 2\,.
\end{equation}
The very same argument applied to $y$ in place of $x$ implies that, up to decrease $\ell$ if necessary, calling $\u H=[-\ell,\ell]\times\{0\}\in\u\Q$ and $H=\omega_y^{-1}(\u H)$ we have that $H$ is a segment centered at $y$ on which $u$ is continuous, and
\begin{equation}\label{uom2}
\|u-\omega_y\|_{L^\infty(H)} \leq \frac\ell 2\,.
\end{equation}
Again up to decrease $\ell$, we can assume that $V$ and $H$ are two disjoint segments, since they are arbitrarily short and centered at the two distinct points $x$ and $y$.\par

Let then $\gamma:[0,1]\to\QQ\setminus\NN_u$ be any Lipschitz curve whose image contains both $V$ and $H$. Since $u$ is NCL, there exists a continuous and injective curve $\varphi:[0,1]\to \u\QQ$ such that
\[
\|u\circ \gamma - \varphi\|_{L^\infty[0,1]} < \frac \ell 2\,,
\]
and this is clearly a contradiction with~(\ref{uom1}) and~(\ref{uom2}).
\end{proof}

We are now in position to prove Theorem~\ref{NCL>0NC}.

\proofof{Theorem~\ref{NCL>0NC}}
Let $u\in W^{1,1}(\QQ;\u\QQ)$ be an NCL function coinciding with the identity on $\partial\QQ$ and with $\det Du>0$ a.e.\,. To prove that $u$ is an NC function, we take a collection $\Gamma$ of curves as in Definition~\ref{defNC} and $\eps>0$, and we have to find some continuous and injective function $v:\Gamma\to\u\QQ$, coinciding with the identity on $\Gamma\cap\partial\QQ$, and such that $\|v-u\|_{L^\infty(\Gamma)}<\eps$. We divide the proof in three steps.
\step{I}{Reduction to the case $\Gamma=\G(K)$.}
In this first step, we observe that it is enough to consider the case when $\Gamma=\G(K)$. In the next steps we will prove the claim in this case.\par

First of all, since $\Gamma$ is a finite union of Lipschitz curves not intersecting $\NN_u$, there is a bi-Lipschitz bijection $\Phi:\QQ\to\QQ$ such that $\Phi(\Gamma)\subseteq \G(K)$ for a sufficiently large $K$. If we call $\overline u=u\circ \Phi^{-1}$, we clearly have that $\overline u\in W^{1,1}(\QQ;\u\QQ)$ is an NCL function coinciding with $\Phi^{-1}$ on $\partial\QQ$ and satisfying $\det D\overline u>0$ a.e.\,. Notice that $\Phi(\Gamma)$ does not intersect $\NN_{\overline u}$. As a consequence, since $\NN_{\overline u}$ is $\H^1$-negligible, it is possible to modify $\Phi$ so to get the additional property that the whole $\G(K)$ does not intersect $\NN_{\overline u}$. Assuming then that the claim is true for the special case when $\Gamma=\G(K)$, and keeping in mind Remark~\ref{thatsobv} we can find a function $\overline v:\G(K)\to\u\QQ$ which is continuous, injective, coincides with $\Phi^{-1}$ on $\partial\QQ$, and
\[
\| \overline v - \overline u \|_{L^\infty(\smG(K))} < \eps\,.
\]
We can then define $v=\overline v \circ \Phi$. This is a continuous, injective function defined on $\Phi^{-1}(\G(K))$, which coincides with the identity on $\partial\QQ$, and which satisfies
\[
\| v - u \|_{L^\infty(\Phi^{-1}(\smG(K)))}=\| \overline v - \overline u \|_{L^\infty(\smG(K))} < \eps\,.
\]
Since $\Phi^{-1}(\G(K))\supseteq \Gamma$, we conclude the thesis.

\step{II}{Reduction to the case $\H^1(\G(K)\cap\Sigma^+)=0$.}
We have to show the result in the special case when $\Gamma=\G(K)$. In this step, we further reduce ourselves to the case when $\G(K)$ is a good starting grid and $\G(K)\cap\Sigma^+$ is $\H^1$-negligible, where $\Sigma^+\subseteq\QQ$ is the set of points $x\in\QQ$ such that either $x$ is not a Lebesgue point for $Du$, or $\det Du(x)=0$. Keep in mind that $|\Sigma^+|=0$ by assumption, hence for a.e. $s\in [0,1]$ and for a.e. $t\in[0,1]$ we have
\begin{equation}\label{trueae}
\begin{array}{cc}
\H^1\big(\{s\}\times [0,1]\cap\Sigma^+\big)=0\,,\qquad &\qquad \H^1\big([0,1]\times \{t\}\cap\Sigma^+\big)=0\,, \\
u\in W^{1,1}\big(\{s\}\times [0,1]\big)\,,\qquad & \qquad u\in W^{1,1}\big([0,1] \times \{t\}\big)\,, \\
\big(\{s\}\times [0,1]\big)\cap \NN_u=\emptyset\,,\qquad & \qquad \big([0,1]\times \{t\}\big)\cap \NN_u=\emptyset\,.
\end{array}
\end{equation}
As a consequence, we can find $0=s_0<s_1<s_2 \cdots < s_K=1$ and $0=t_0<t_1<t_2 \cdots < t_K=1$, with every $s_i$ (resp., $t_j$) arbitrarily close to $i/K$ (resp., $j/K$), such that~(\ref{trueae}) is true for every $s=s_i$ and every $t=t_j$. Since $\G(K)\cap \NN_u=\emptyset$, we can select these $\{s_i\}$'s and $\{t_j\}$'s in such a way that the restriction of $u$ to any horizontal or vertical segment of $\G(K)$ is arbitrarily close to its restriction in the corresponding horizontal segment $[0,1]\times \{t_j\}$ or vertical segment $\{s_i\}\times [0,1]$. The thesis follows then immediately under the assumption that the claim is true when $\Gamma=\G(K)$, $\G(K)$ is a good starting grid, and $\H^1(\G(K)\cap\Sigma^+)=0$.

\step{III}{The case when $\Gamma=\G(K)$ and $\H^1(\G(K)\cap\Sigma^+)=0$.}
Thanks to Step~I and Step~II, we can now conclude the proof by only considering the special case in which $\Gamma=\G(K)$ is a good starting grid and $\H^1(\Sigma^+\cap\G(K))=0$.\par

Lemma~\ref{approx36} with $\Sigma=\Sigma^+\cap\G(K)$ provides us with a good arrival grid $\widetilde\G$ associated with $\G$ and with side-length $\eta<\eps/\sqrt 2$. Let us call $0=x_0<x_1<x_2\, \cdots\, ,\, x_N=1$ and $0=y_0<y_1<y_2\, \cdots < y_M=1$ the coordinates associated with $\widetilde\G$ as in~(\ref{defgag}). Keep in mind that the set $\G\cap u^{-1}(\widetilde\G)\cap\QQ^\circ$ is finite and none of its points belongs to $\Sigma$. Let us then consider the finite set made by $\G\cap u^{-1}(\widetilde\G)\cap\QQ^\circ$ together with all the points of the form $(p/K,q/K)$ with either $p\in \{0,\, K\}$ and $0\leq q\leq K$, or $q\in \{0,\,K\}$ and $0<p<K$, and let us call $P_1,\, P_2,\, \dots \, , \, P_H$ its elements. Moreover, let us call $\u P_i=u(P_i)$ their images. Notice that the points $\u P_i$'s are well defined because every $P_i$ belongs to $\G(K)$, so it is a continuity point of $u$, and in particular $\u P_i=P_i$ for all the points $P_i$ which are on $\partial\QQ$. Observe also that the points $\u P_i$'s are \emph{all distinct} by Lemma~\ref{gmmg} and since every $P_i$ does not belong to $\Sigma$. In fact, the points of $\G\cap u^{-1}(\widetilde\G)\cap\QQ^\circ$ do not belong to $\Sigma$ by Lemma~\ref{approx36}, and the other points $P_i$ are on the boundary of $\QQ$, so they do not belong to $\Sigma$ because $u$ equals the identity on a small neighborhood of the boundary (we can always assume this without loss of generality, as noted in Remark~\ref{remidb}).\par

For every $0\leq n<N$ and $0\leq m<M$, let us define the rectangle
\[
\u\QQ_{n,m}:=[x_n,x_{n+1}]\times [y_m,y_{m+1}]\,.
\]
Let also $\Phi_{n,m}:\u\QQ_{n,m}\to \overline{B_1}$ be a bi-Lipschitz bijection between $\u\QQ_{n,m}$ and the closure of the unit ball. For every $\u P,\,\u Q\in \u\QQ_{n,m}$, we let $[\u P,\u Q]=\Phi_{n,m}^{-1}(\mathS)$, where $\mathS\subseteq \overline{B_1}$ is the closed segment joining $\Phi_{n,m}(\u P)$ and $\Phi_{n,m}(\u Q)$. The choice of the particular map $\Phi_{n,m}$ makes no difference, what is important is that $[\u P,\u Q]$ is a sort of ``generalised segment'' between $\u P$ and $\u Q$. More precisely, it is a path joining $\u P$ and $\u Q$ which is completely in the interior of the rectangle, except possibly for the two endpoints. In particular, given four ordered points $\u P,\,\u Q,\,\u R,\,\u S\in\partial\u\QQ_{n,m}$, the generalised segments $[\u P,\u R]$ and $[\u Q,\u S]$ have exactly one point in common, while $[\u P,\u Q]$ and $[\u R,\u S]$ are disjoint, as well as $[\u P,\u S]$ and $[\u Q,\u R]$.\par

We are now in position to build the map $v:\G\to\u\QQ$. First of all, for any point $1\leq i\leq H$ we set $v(P_i)=\u P_i$. Let now $i,\,j$ be two distinct indices in $\{1,\,2,\, \dots\,,\,H\}$. We say that the segment $P_iP_j$ is a \emph{simple segment} if it is contained in the grid $\G$, and it does not contain any other point $P_k$ with $k\notin \{i,\,j\}$. Notice that the grid $\G$ is the essentially disjoint union of finitely many simple segments, and moreover every $P_i$ is endpoint of exactly two simple segments. By construction, the image under $u$ of any simple segment is entirely contained in some rectangle $\u\QQ_{n,m}$ (the image is actually contained in the \emph{interior} of $\u\QQ_{n,m}$ except for the two endpoints, unless the simple segment is contained in $\partial\QQ$). We let then the image of the simple segment $P_iP_j$ under $v$ be the generalised segment $[\u P_i,\u P_j]$. Hence, to conclude the definition of $v$ we only have to specify the parameterisation of $v$ on each simple segment.\par

Before doing so we observe that, regardless of the parameterisation that we will set, in any case we will have that for every point $P\in \G$ the two images $u(P)$ and $v(P)$ are in a same rectangle. Therefore, since the grid $\widetilde \G$ has side-length $\eta<\eps/\sqrt 2$, we will have
\[
\|u-v\|_{L^\infty(\smG)} \leq \sqrt 2\, \eta < \eps\,.
\]
In other words, the parameterisation of $v$ has only to be chosen in such a way that the resulting $v$ is continuous, injective, and coincides with the identity on $\partial \QQ$.\par

To specify the parameterisation of $v$, we need first to select the image of the \emph{vertices} of $\G$, i.e. the points of the form $V_{p,q}=(p/K,q/K)$ with $1\leq p,\,q \leq K-1$. Let $V=V_{p,q}$ be any such vertex. By construction, $V$ is the intersection between two simple segments, a horizontal one and a vertical one. Just for simplicity of notation, let us think that the horizontal simple segment is $P_1P_2$, and the vertical one is $P_3P_4$, with $P_1$ on the left of $P_2$ and $P_3$ above $P_4$. The points $\u P_1,\,\u P_2,\, \u P_3$ and $\u P_4$ are then four distinct points on the boundary of a same rectangle $\u\QQ_{n,m}$. A fundamental fact is the following.

\claim{1}{The points $\u P_2$ and $\u P_4$ are on the same one of the two connected components in which $\partial\u\QQ_{n,m}$ is divided by $\u P_1$ and $\u P_3$.}
To show the claim, we start defining two curves $\gamma_a$ and $\gamma_b$, one connecting $P_1$ and $P_3$, and the other one connecting $P_4$ and $P_2$. More precisely, $\gamma_a$ is done by three segments; a horizontal one, connecting $P_1$ and a point $V_{\rm left}$ which is contained in the interior of $P_1 V$; then a diagonal one, connecting $V_{\rm left}$ and a point $V_{\rm up}$ in the interior of $V P_3$; and then a vertical one, connecting $V_{\rm up}$ and $P_3$. Similarly, $\gamma_b$ is done by a vertical segment connecting $P_4$ and some $V_{\rm down}$ in the interior of $P_4 V$, together with a diagonal one connecting $V_{\rm down}$ and some point $V_{\rm right}$ in $V P_2$, and a horizontal one connecting $V_{\rm right}$ and $P_2$. The curves $\gamma_a$ and $\gamma_b$ are clearly disjoint, and they are admissible for almost every choice of the four points $V_{\rm left},\, V_{\rm right},\, V_{\rm up}$ and $V_{\rm down}$. Since $V$ is a continuity point for $u$, and $u(V)$ is in the interior of $\u\QQ_{n,m}$, we can select the four points arbitrarily close to $V$, in such a way that not only $\gamma_a$ and $\gamma_b$ are disjoint and admissible, but in addition their image under $u$ is entirely contained in the interior of $\u\QQ_{n,m}$ except for the four endpoints. We let then $\gamma:[0,1]\to\QQ$ be a Lipschitz, injective, admissible curve, containing in its interior both $\gamma_a$ and $\gamma_b$, and being affine for a while around the four points $P_1,\, P_2,\, P_3$ and $P_4$.\par

By the definition of a good arrival grid, and since $\gamma$ is affine around $P_1$, there exist a small interval $(t_1^-,t_1^+)$, containing $\gamma^{-1}(P_1)$, such that $\u P_1^-:=u\circ \gamma(t_1^-)$ is outside the rectangle $\u\QQ_{n,m}$, while $\u P_1^+ =u\circ\gamma(t_1^+)$ is in its interior. Similarly, we define the intervals $(t_2^-,t_2^+),\, (t_3^-,t_3^+)$ and $(t_4^-,t_4^+)$, respectively containing $\gamma^{-1}(P_2),\, \gamma^{-1}(P_3)$ and $\gamma^{-1}(P_4)$, and with points $\u P_2^-,\, \u P_3^-$ and $\u P_4^+$ in the interior of $\QQ_{m,n}$ and $\u P_2^+,\, \u P_3^+$ and $\u P_4^-$ outside of it. We can select the four intervals so small that they are disjoint, and that the points $\u P_i^\pm$ are arbitrarily close to the corresponding $\u P_i$, in particular with a distance much smaller than $\min_{1\leq i\neq j\leq 4} |\u P_i-\u P_j|$.\par

Since $u$ is NCL, we can find a continuous and injective map $\varphi:[0,1]\to\QQ$ such that $\delta:=\| \varphi-u\circ \gamma\|_{L^\infty([0,1])}$ is arbitrarily small. We can take $\delta$ so small that the points $\varphi(t_1^-),\,\varphi(t_2^+),\,\varphi(t_3^+)$ and $\varphi(t_4^-)$ are outside of $\u\QQ_{n,m}$, while the points $\varphi(t_1^+),\,\varphi(t_2^-),\,\varphi(t_3^-)$ and $\varphi(t_4^+)$ are in its interior. Since $\varphi$ is continuous, we can define $t_1$ (resp., $t_4$) as the last point of the interval $(t_1^-,t_1^+)$ (resp., $(t_4^-,t_4^+)$) such that $\varphi(t_1)$ (resp., $\varphi(t_4)$) belongs to $\partial\u\QQ_{n,m}$. Similarly, $t_2$ and $t_3$ are the first points of the intervals $(t_2^-,t_2^+)$ and $(t_3^-,t_3^+)$ such that $\varphi(t_2)$ and $\varphi(t_3)$ belong to $\partial\u\QQ_{n,m}$. Calling $\widetilde{\u P}_i=\varphi(t_i)$ for $i=1,\,2,\,3,\,4$, by the smallness of $\delta$ we have that the points $\widetilde{\u P}_1,\,\widetilde{\u P}_2,\,\widetilde{\u P}_3$ and $\widetilde{\u P}_4$ are arbitrarily close to the corresponding points $\u P_1,\,\u P_2,\,\u P_3$ and $\u P_4$, hence in particular the order on $\partial\u\QQ_{n,m}$ is the same.\par

Keep now in mind that $u\circ \gamma$ is contained in the interior of $\u\QQ_{n,m}$ in the intervals $[t_1^+,t_3^-]$ and $[t_4^+,t_2^-]$. As a consequence, again provided that $\delta$ is small enough, we deduce that $\varphi$ is contained in the interior of $\u\QQ_{n,m}$ in the open segments $(t_1,t_3)$ and $(t_2,t_4)$. Since $\varphi$ is injective, this yields that $\widetilde{\u P}_1$ and $\widetilde{\u P}_3$ divide $\partial\QQ_{n,m}$ in two parts, and both $\widetilde{\u P}_2$ and $\widetilde{\u P}_4$ are on the same one of the two. As noticed above, the same is true with the points $\u P_i$ instead of the points $\widetilde{\u P}_i$, hence the claim is proved.\par

Thanks to Claim~1, there is exactly one intersection point between the generalised segments $[\u P_1,\u P_2]$ and $[\u P_3,\u P_4]$. Since these two generalised segments have to be the images under $v$ of the segments $P_1P_2$ and $P_3P_4$ respectively, we must define $\u V=v(V)=[\u P_1,\u P_2]\cap[\u P_3,\u P_4]$.\par

Summarizing, we have defined the image $\u P_i$ of each of the points $P_i$, as well as the image $\u V_{p,q}$ of each vertex $V_{p,q}$. Notice that $\G$ is a finite union of essentially disjoint segments, each of which has both endpoints and no internal point in the set $\{P_i,\, 1\leq i \leq H\}\cup \{V_{p,q},\, 1\leq p,\,q\leq K-1\}$. We are finally in position to give the definition of $v$. Calling $AB$ the generic segment of the above form, on the segment $AB$ we let $v$ be the path $[\u A,\, \u B]$, parametrised at constant speed, which makes sense since the points $\u A$ and $\u B$ belong to a same rectangle $\u\QQ_{n,m}$ by construction.\par

Notice that, as decided above, the image under $v$ of every simple segment $P_iP_j$ is the generalised segment $[\u P_i,\u P_j]$. Moreover, the parametrisation of $v$ has constant speed if the simple segment does not contain any vertex, while otherwise the velocity might change from a part of the segment to another, and in principle $v$ might even not be injective on a simple segment. Since by construction it is clear that $v$ is continuous and equals the identity on $\partial\QQ$, to conclude the proof we only have to check that $v$ is injective.\par

It is simple to observe that, in order to establish the injectivity of $v$, it is enough to show that the images of any two disjoint simple segments have empty intersection. Up to renumbering, let us then assume that $P_1P_2$ and $P_3P_4$ are two simple segments with $P_1P_2\cap P_3P_4=\emptyset$. We have to show that
\begin{equation}\label{tte}
[\u P_1,\u P_2]\cap [\u P_3,\u P_4]=\emptyset\,.
\end{equation}
Since the points $\u P_i$ are all distinct, and the interiors of the generalised segments are in the interiors of the corresponding rectangles of $\widetilde\G$, (\ref{tte}) is obvious if the simple segments $P_1P_2$ and $P_3P_4$ are not associated with the same rectangle. Let us then assume that both $[\u P_1,\u P_2]$ and $[\u P_3,\u P_4]$ belong to a same rectangle $\u\QQ_{n,m}$. To show~(\ref{tte}), we have to check that the points $\u P_1,\, \u P_2,\, \u P_3$ and $\u P_4$ have the correct order in $\partial\u\QQ_{n,m}$, namely, $\u P_3$ and $\u P_4$ are on the same one of the two connected components in which $\partial\u\QQ_{n,m}$ is divided by $\u P_1$ and $\u P_2$. And finally, to obtain this fact one has to argue exactly as in the proof of Claim~1, considering a Lipschitz, injective, admissible curve which contains both the simple segments $P_1P_2$ and $P_3P_4$.
\end{proof}

\end{document}